\definecolor{c20}{rgb}{0.,0.7,0.}
\definecolor{c30}{rgb}{0.,0.,1.}
\definecolor{c40}{rgb}{1,0.1,0.7}
\definecolor{c50}{rgb}{1,0,0}
\def\rd#1{\textcolor{black}{#1}}
\begin{document}
\bibliographystyle{plainnat}
\setcitestyle{numbers}

\title{Sample path properties of reflected Gaussian processes}

\author{Kamil Marcin Kosi\'nski}
\address{Mathematical Institute, University of
Wroc\l aw, pl.\ Grunwaldzki 2/4, 50-384 Wroc\l aw, Poland.}
\email{Kamil.Kosinski@math.uni.wroc.pl}
\author{Peng Liu}
\address{
Department of Actuarial Science, University of Lausanne, UNIL-Dorigny 1015 Lausanne, Switzerland}
\email{Peng.Liu@unil.ch}

\subjclass[2010]{Primary: 60F15, 60G70; Secondary: 60G22.}

\keywords{Extremes of Gaussian fields, storage processes, Gaussian processes, law of the iterated logarithm}

\date{\today}

\begin{abstract}
We consider a stationary queueing process $Q_X$ fed by a centered Gaussian process $X$ with stationary increments and variance function satisfying classical regularity conditions. A criterion when, for a given function $f$, $\mathbb P (Q_{X}(t) > f(t)\, \text{ i.o.})$  equals 0 or 1 is provided. Furthermore, an Erd\"os--R\'ev\'esz type law of the iterated logarithm is proven for the last passage time $\xi (t) = \sup\{s:0\le s\le t, Q_{X}(s)\ge f(s)\}$. Both of these findings extend previously known results that were only available for the case when $X$ is a fractional Brownian motion.
\end{abstract}

\maketitle

\section{Introduction and Main Results}
\label{sec:intro}
Let $X=\{X(t):t\ge 0\}$ be a centered Gaussian process with stationary increments and almost surely continuous sample paths. Given $c>0$, consider a \textit{reflected} (at 0) Gaussian process $Q_X = \{Q_X(t):t\ge0\}$ given by the following formula
\begin{equation}
\label{prob.0}
Q_X(t)=X(t)-ct+\max\left(Q_X(0),-\inf_{s\in[0,t]}(X(s)-cs)   \right).
\end{equation}
It is well known in queueing and risk theory, e.g., \cite{Reich58}, that the unique stationary solution of \eqref{prob.0} has the following representation
\[
Q_X(t)=\sup_{-\infty< s\le t}\left(X(t)-X(s)-c(t-s)\right).
\]
Due to numerous application, $Q_X$ has been studied in the literature under different levels of generality,
e.g., \cite{Norros94,Husler99,Debicki02,Husler04b,Dieker05a,Hashorva13,Liu15}.

Let \rd{$f$ be any positive nondecreasing function on $\rr$}. Kolmogorov's zero-one law implies that the process $Q_X$ crosses the function $f$ infinitely many times with probability 0 or 1. Assume that $\prob{Q_X(t)>f(t)\text{ i.o.}}=1$ and define $\xi_{f}=\{\xi_f(t):t\ge 0\}$ as the last crossing time before time $t$, that is,
\[
\xi_f (t) = \sup\{s:0\le s\le t, Q_{X}(s)\ge f(s)\}.
\]
By the assumption on $f$ it follows that
\[
\lim_{t\to\infty}\xi_f(t)=\infty\quad\text{ and }\quad\limsup_{t\to\infty}(\xi_f(t)-t)=0\quad \text{a.s.}
\]
The purpose of this paper is to provide a tractable criterion to verify the zero-one law as well as to give the asymptotic lower bound on $\xi_f(t)-t$. \citet{Erdos90} investigated the lower bound in the case when $Q_X$ is substituted by Brownian motion $W$ and $f(t) = \sqrt{2t\log_2 t}$ with $\log_2 t = \log\log t$. Subsequently similar results are known as Erd\"os--R\'ev\'esz type law of the iterated logarithm.

In the reminder of the paper we impose the following assumptions on variance function $\sigma^2$ \rd{of $X$}:
\begin{itemize}
\item[\bf AI:] \rd{$\lim_{t\toi}\sigma^2(t)/t^{2\alpha_\infty} =A_\infty$, for some} $A_\infty>0$, $\alpha_\infty\in (0,1)$. Further, \rd{$\sigma^2$ is positive} and twice continuously differentiable on $(0,\infty)$ with its first derivative $\dot{\sigma^2}$ and second derivative $\ddot{\sigma^2}$ being ultimately monotone at $\infty$.
\item[\bf AII:] \rd{$\lim_{t\to0^+}\sigma^2(t)/t^{2\alpha_0} = A_0$, for some} $A_0>0$, $\alpha_0\in(0,1]$.
\end{itemize}
Assumptions \textbf{AI-AII} allow us to cover models that play important role in
Gaussian storage models, including both
aggregations of \textit{fractional Brownian motions} and \textit{integrated stationary Gaussian processes};
see, e.g., \cite{Norros94,Husler99,Dieker05a,Debicki02}. In further analysis we tacitly assume that the variance function $\sigma^2$ of $X$ satisfies both {\bf AI} and {\bf AII}. Our first contribution is the following criterion; see, e.g., \cite{Watanabe70,Qualls71} for similar results in the classical setting of \textit{non-reflected} stationary Gaussian process.
\begin{theorem}
\label{thm:equiv}
For all positive and nondecreasing functions \rd{$f$ on some interval $[T,\infty)$, $T>0$},
\[
\prob{Q_{X}(t) > f(t)\quad\text{\rm{i.o.}}} = 0\quad \text{or}\quad 1,
\]
according as the integral
\[
\int_T^\infty
\frac{\psi(f(u))}{f(u)}
\D u\quad \text{is finite or infinite},
\]
where
\[
\psi(u):=\prob{\sup_{t\in[0,u]} Q_{X}(t) > u}.
\]
\end{theorem}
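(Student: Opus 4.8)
Since the zero--one dichotomy is already granted in the Introduction, the plan is only to identify which of the two values occurs, and the basic objects are the events
\[
E_m=\{\exists\, t\in[m,m+1]:\ Q_X(t)>f(t)\},\qquad m=M,M+1,\dots,
\]
where $M$ is a large integer; by construction $\{Q_X(t)>f(t)\ \text{i.o.}\}=\limsup_m E_m$. Since $f$ is nondecreasing and $Q_X$ is stationary,
\[
\prob{\sup_{t\in[0,1]}Q_X(t)>f(m+1)}\ \le\ \prob{E_m}\ \le\ \prob{\sup_{t\in[0,1]}Q_X(t)>f(m)}.
\]
The key analytic input --- the place where \textbf{AI}--\textbf{AII} enter --- is the sharp asymptotics of $\prob{\sup_{t\in[0,T]}Q_X(t)>v}$ as $v\to\infty$, both for $T$ fixed and for $T$ of order $v$, obtained by the Pickands--Piterbarg method adapted to the storage process $Q_X$; in particular these give the two--sided estimate $\prob{\sup_{t\in[0,1]}Q_X(t)>v}\asymp\psi(v)/v$, so that the integrand in the statement is, up to constants, $\prob{\sup_{t\in[0,1]}Q_X(t)>f(u)}$. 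Because $u\mapsto\psi(f(u))/f(u)$ is eventually nonincreasing ($f$ nondecreasing, and $v\mapsto\psi(v)/v$ eventually nonincreasing), a routine comparison of series and integral gives
\[
\sum_{m\ge M}\frac{\psi(f(m+1))}{f(m+1)}\ \le\ \int_{M}^{\infty}\frac{\psi(f(u))}{f(u)}\,\D u\ \le\ \sum_{m\ge M}\frac{\psi(f(m))}{f(m)}
\]
for $M$ large, so the series $\sum_m\psi(f(m))/f(m)$ and the integral in the statement are finite or infinite simultaneously. (In the convergent case necessarily $f(u)\to\infty$, since otherwise the integrand is bounded below by a positive constant; in the divergent case $f$ might stay bounded, but then $Q_X$ exceeds a fixed level infinitely often and the conclusion is immediate, so one may assume $f(u)\to\infty$ there too.)

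\emph{Convergent case.} By the upper bound above together with the asymptotics, $\prob{E_m}\le C\,\psi(f(m))/f(m)$, hence $\sum_m\prob{E_m}<\infty$ and the first Borel--Cantelli lemma gives $\prob{\limsup_m E_m}=0$.

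\emph{Divergent case.} By the lower bound, $\prob{E_m}\ge c\,\psi(f(m+1))/f(m+1)$, hence $\sum_m\prob{E_m}=\infty$; the second Borel--Cantelli lemma does not apply directly because consecutive $E_m$ are strongly dependent, so I would run a truncated second--moment argument on the partial sums $N_n=\sum_{m=M}^{n}\mathbf 1_{E_m}$. Fixing a large integer $R$, the pairs with $|m-m'|\le R$ contribute at most $(2R+1)\E{N_n}$ to $\E{N_n^2}$, whereas for $|m-m'|>R$ one needs $\prob{E_m\cap E_{m'}}\le(1+o(1))\,\prob{E_m}\,\prob{E_{m'}}$ uniformly; combining these, $\E{N_n^2}\le(1+o(1))(\E{N_n})^2+O(R\,\E{N_n})$, and since $\E{N_n}\to\infty$ this is $(1+o(1))(\E{N_n})^2$. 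By the second--moment (Paley--Zygmund) inequality $\prob{N_n>0}\ge(\E{N_n})^2/\E{N_n^2}$ is bounded below uniformly in $n$, and since the same estimates hold with $M$ replaced by any larger index, $\prob{\exists\,t\ge M:\ Q_X(t)>f(t)}=\prob{\bigcup_{m\ge M}E_m}$ is bounded below uniformly in $M$; hence $\prob{\limsup_m E_m}>0$, and the zero--one law upgrades this to $1$.

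\emph{Main obstacle.} The hard part is the asymptotic--independence estimate $\prob{E_m\cap E_{m'}}\le(1+o(1))\prob{E_m}\prob{E_{m'}}$ for $|m-m'|$ large, i.e.\ a joint--tail bound for $Q_X$ on two far--apart unit windows at the high level $\asymp f$. In the non--reflected stationary Gaussian setting this would follow from clean covariance bounds and Berman/Slepian comparison, but $Q_X(t)=\sup_{s\le t}\big(X(t)-X(s)-c(t-s)\big)$ is a strongly nonlinear, all--time--supremum functional of $X$, so the decorrelation has to be produced in two steps: truncate the supremum defining $Q_X$ to a bounded but large time horizon and control the truncation error on the relevant large--deviation scale --- this is exactly where the polynomial behaviour of $\sigma^2$ at $\infty$ in \textbf{AI}, together with the ultimate monotonicity of $\dot{\sigma^2}$ and $\ddot{\sigma^2}$, is used --- and only then apply a Gaussian comparison inequality to the resulting, essentially finite--dimensional, pieces. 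The same machinery also produces the sharp asymptotics of $\prob{\sup_{[0,T]}Q_X>v}$ used throughout, with \textbf{AII} governing the short--time behaviour of $X$ (hence of $Q_X$) that fixes the Pickands constant and the polynomial prefactor.
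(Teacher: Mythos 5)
Your overall architecture (blocking, two Borel--Cantelli halves, Gaussian comparison for near-independence) is in the spirit of the paper, but there is a genuine gap at the very first quantitative step: the claimed two-sided bound $\prob{\sup_{t\in[0,1]}Q_X(t)>v}\asymp\psi(v)/v$, on which both halves of your argument rest, is false whenever $\alpha_\infty>1/2$ (e.g.\ fBm input with $H>1/2$). The sharp asymptotics \eqref{eq:tauSmaller} for $\prob{\sup_{s\in[0,T_0]\times J(u)}Z_u>m(u)}\sim C\,T_0\,u^{\gamma}\Psi(m(u))/m(u)$ are only valid for $T_0>u^{H'}$ with $H'\in(-\gamma/2,0)$; a unit time window corresponds to $T_0=1/u$, and for $\alpha_\infty>1/2$ one has $\gamma<2$, so $1/u$ lies below the critical scale $\Delta(u)/u\asymp u^{-\gamma/2}$. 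In that regime the correlation length $\Delta(v)$ of the storage process at level $v$ tends to infinity, a unit window behaves like a single point, and $\prob{\sup_{[0,1]}Q_X>v}\asymp\prob{Q_X(0)>v}\asymp v^{\gamma/2}\Psi(m(v))/m(v)\gg v^{\gamma-1}\Psi(m(v))/m(v)\asymp\psi(v)/v$. Consequently, in the convergent case, finiteness of $\int\psi(f(u))/f(u)\,\D u$ does \emph{not} imply summability of your unit-block probabilities, and the first Borel--Cantelli half collapses. This is exactly why the paper blocks on intervals of length $f(\cdot)$ (the intervals $M_i$ in the convergent case, $I_k$ in the divergent case): for such blocks the exceedance probability is genuinely $\asymp\psi(f)$, and each block contributes $\asymp\psi(f)/f$ per unit time, matching the integral.

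Two further points in the divergent case would also need repair. First, the decorrelation scale is not a fixed lag $R$: by \eqref{eq:rboundOrder} the correlation between windows at temporal distance $d$ is only small once $d$ dominates $\tau^*(f(m)+f(m'))\to\infty$, so the ``near-diagonal band'' in your second-moment computation has width growing like $f(m)$, not $O(R)$, and the bound $O(R\,\E{N_n})$ is not available. Second, the uniform estimate $\prob{E_m\cap E_{m'}}\le(1+o(1))\prob{E_m}\prob{E_{m'}}$ obtained via Berman's inequality requires the level $f$ to live in a window where the comparison error terms are summable; the paper secures this through the reduction of \autoref{lem:v2} (Watanabe/Qualls), restricting to $\overleftarrow{m}(\sqrt{\log t})\le f(t)\le\overleftarrow{m}(\sqrt{3\log t})$, a step your sketch does not address beyond the bounded-$f$ remark. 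With $f$-length blocks, the growing-band issue and the level restriction are precisely what the paper's estimates of $P_2$ in \autoref{lem:bound} and the argument in the proof of \autoref{thm:equiv} handle.
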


With $\overleftarrow{m}$ being the generalized inverse of
\[
m(u)=\inf_{t\geq 0}\frac{u(1+ct)}{\sigma(ut)},
\]
define function $f_p$ by
\begin{equation}
\label{eq:fp&gamma}
f_p(t) = \overleftarrow{m}\left(\sqrt{2\left(\log t+\left(\frac{\gamma-1}{2(1-\alpha_\infty)}-p\right)\log_2 t\right)}\right),
\quad
\gamma=\left\{\begin{array}{cc}
\frac{2(1-\alpha_\infty)}{\alpha_\infty}& \alpha_\infty\geq 1/2\\
\frac{2(1+\alpha_0-2\alpha_\infty)}{\alpha_0}& \alpha_\infty<1/2
\end{array}\right.,
\end{equation}
and a positive constant $\mathscr C$ as
\[
\mathscr C =  \half (\mathcal{H}_{\eta_{\alpha_\infty}})^2\sqrt{\frac{A}{B}}\zeta_{\alpha_\infty}
\left(
\frac{\sqrt{2 A_\infty}}{A}
\right)^{\frac{\gamma-1}{1-\alpha_\infty}},
\]
where the remaining constants are defined in \autoref{sec:prelim}. Since the exact asymptotics of $\psi(u)$, as $u$ grows large,
were found in \citep{Debicki16}, c.f., \autoref{lem:asymptotics}, it follows that
\begin{equation}
\label{eq:fp&CbyEq}
\frac{\psi(f_p(u))}{f_p(u)}
=\mathscr C (u\log^{1-p} u)^{-1}(1+o(u)),\as u.
\end{equation}
Hence, by \autoref{thm:equiv}, $\prob{Q_{B_H}(t) > f_p(t)\text{ i.o.}} = 1$ provided that $p\ge 0$, which leads to the following conclusion after deriving the exact asymptotics of $f_p$.
\begin{corollary}
\label{cor:main}
\[
\limsup_{t\toi} \frac{Q_{X}(t)}
                     {(\log t)^{\frac{1}{2(1-\alpha_\infty)}}} =\left(\frac{2A_\infty}{A^2}\right)^{\frac{1}{2(1-\alpha_\infty)}}
\quad\text{a.s.}
\]
\end{corollary}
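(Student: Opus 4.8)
The plan is to obtain the corollary from \autoref{thm:equiv} applied to the one-parameter family $\{f_p\}_{p\in\mathbb{R}}$, combined with the leading-order asymptotics of $f_p$. First I would check that each $f_p$ is admissible for \autoref{thm:equiv}: the generalized inverse $\overleftarrow m$ is nondecreasing by construction, and since $m(u)\to\infty$ it is finite-valued with $\overleftarrow m(v)\to\infty$; also $t\mapsto\log t+\bigl(\tfrac{\gamma-1}{2(1-\alpha_\infty)}-p\bigr)\log_2 t$ is eventually positive and increasing. Hence $f_p$ is positive and nondecreasing on some $[T,\infty)$ (with $T=T(p)$), so \autoref{thm:equiv} applies.

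The heart of the argument is the asymptotics of $f_p$. By \textbf{AI}, $\sigma(ut)=\sqrt{A_\infty}\,(ut)^{\alpha_\infty}(1+o(1))$ as $u\to\infty$, uniformly for $t$ in compact subsets of $(0,\infty)$; moreover $\tfrac{u(1+ct)}{\sigma(ut)}\to\infty$ as $t\downarrow0$ (since $\sigma(0^+)=0$) and as $t\toi$ (since $1-\alpha_\infty>0$), so for large $u$ the infimum defining $m(u)$ is attained at some $t_u$ bounded away from $0$ and $\infty$. Interchanging the infimum with the limit (the limiting problem $\inf_t\tfrac{1+ct}{t^{\alpha_\infty}}$ has the unique minimiser $t^\ast=\alpha_\infty/(c(1-\alpha_\infty))$, and the ultimate monotonicity of $\dot{\sigma^2},\ddot{\sigma^2}$ in \textbf{AI} controls the approach) gives $m(u)=\tfrac{A}{\sqrt{A_\infty}}\,u^{1-\alpha_\infty}(1+o(1))$ as $u\to\infty$, with $A$ the constant of \autoref{sec:prelim}. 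Inverting, $\overleftarrow m(v)=\bigl(\tfrac{\sqrt{A_\infty}}{A}\,v\bigr)^{1/(1-\alpha_\infty)}(1+o(1))$ as $v\to\infty$; and since $\sqrt{2\bigl(\log t+(\tfrac{\gamma-1}{2(1-\alpha_\infty)}-p)\log_2 t\bigr)}=\sqrt{2\log t}\,(1+o(1))$, composition yields, for every fixed $p\in\mathbb{R}$,
\[
f_p(t)=\Bigl(\tfrac{\sqrt{A_\infty}}{A}\sqrt{2\log t}\Bigr)^{\frac{1}{1-\alpha_\infty}}(1+o(1))=\Bigl(\tfrac{2A_\infty}{A^2}\Bigr)^{\frac{1}{2(1-\alpha_\infty)}}(\log t)^{\frac{1}{2(1-\alpha_\infty)}}(1+o(1)),\qquad t\toi.
\]

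Given this, both inequalities follow. For the upper bound fix any $p<0$: by \eqref{eq:fp&CbyEq} the integral $\int_T^\infty\tfrac{\psi(f_p(u))}{f_p(u)}\,\D u$ converges together with $\int_T^\infty\tfrac{\D u}{u(\log u)^{1-p}}$, which is finite because $1-p>1$; hence \autoref{thm:equiv} gives $\prob{Q_X(t)>f_p(t)\text{ i.o.}}=0$, so a.s.\ $Q_X(t)\le f_p(t)$ for all large $t$, and dividing by $(\log t)^{1/(2(1-\alpha_\infty))}$ and letting $t\toi$ yields $\limsup_{t\toi}Q_X(t)/(\log t)^{1/(2(1-\alpha_\infty))}\le(2A_\infty/A^2)^{1/(2(1-\alpha_\infty))}$ a.s. For the lower bound take $p=0$: now the integral diverges, so $\prob{Q_X(t)>f_0(t)\text{ i.o.}}=1$, hence a.s.\ $Q_X(t_n)>f_0(t_n)$ along some $t_n\uparrow\infty$, and dividing by $(\log t_n)^{1/(2(1-\alpha_\infty))}$ and invoking the asymptotics of $f_0$ gives the reverse inequality for the $\limsup$. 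Combining the two bounds proves the corollary.

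The one genuinely delicate point is establishing $m(u)=\tfrac{A}{\sqrt{A_\infty}}u^{1-\alpha_\infty}(1+o(1))$, i.e.\ the interchange of the infimum over $t$ with the $u\to\infty$ limit: this requires the uniform convergence of $\sigma(ut)/(ut)^{\alpha_\infty}$ on compacts together with the coercivity ruling out escape of the minimiser to $0$ or $\infty$, exactly what \textbf{AI}--\textbf{AII} provide. Everything else is bookkeeping already prepared by \eqref{eq:fp&CbyEq} and \autoref{thm:equiv}; in fact I would expect the asymptotics of $m$ and $\overleftarrow m$ to be recorded in \autoref{sec:prelim} or to fall out of the computation behind \eqref{eq:fp&CbyEq}, in which case that step is just a cross-reference.
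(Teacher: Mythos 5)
Your proposal is correct and follows essentially the same route the paper intends: the paper derives the corollary directly from \autoref{thm:equiv} together with \eqref{eq:fp&CbyEq} (divergence for $p\ge 0$, convergence for $p<0$) and the leading-order asymptotics of $f_p$, which is exactly your argument. Your additional derivation of $m(u)\sim A u^{1-\alpha_\infty}/\sqrt{A_\infty}$ and hence of $f_p$ just fills in the step the paper summarizes as ``after deriving the exact asymptotics of $f_p$.''
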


Our second contribution is as follows.
\begin{theorem}
\label{thm:main}
If $p>1$, then
\[
\liminf_{t\toi}\frac{\xi_{f_p}(t)-t}{h_p(t)} = - 1\ \ {\rm a.s.}
\]
If $p\in(0,1]$, then
\[
\liminf_{t\toi}\frac{\log\left(\xi_{f_p}(t)/t\right)}{h_p(t)/t } = - 1\ \ {\rm a.s.},
\]
where
\[
h_p(t) = p\frac{f_p(t)}{\psi(f_p(t))}\log_2 t.
\]
\end{theorem}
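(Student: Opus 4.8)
The plan is to deduce \autoref{thm:main} from a two-sided Borel--Cantelli analysis (an Erd\"os--R\'ev\'esz type argument) of the \emph{gap events}
\[
G(t,L):=\bigl\{\,Q_X(s)<f_p(s)\text{ for all }s\in[t-L,t]\,\bigr\}=\bigl\{\xi_{f_p}(t)<t-L\bigr\},\qquad \Phi(t,L):=\prob{G(t,L)},
\]
the last equality using path continuity. The quantitative input, sharpening the criterion of \autoref{thm:equiv}, is that
\[
-\log\Phi(t,L)=(1+o(1))\int_{t-L}^{t}\frac{\psi(f_p(s))}{f_p(s)}\D s\qquad(t\to\infty)
\]
for families $L=L(t)$ with $\log\bigl(t/(t-L)\bigr)=o(\log t)$ and $L$ of the order of the critical gap. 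Using \eqref{eq:fp&CbyEq}, i.e.\ $\psi(f_p(s))/f_p(s)=\mathscr C\,(s\log^{1-p}s)^{-1}(1+o(1))$, one reads off that for $p>1$ the critical gap has length $\sim h_p(t)=o(t)$ and the integrand is essentially constant over it, so the right-hand side equals $(1+o(1))L\,\psi(f_p(t))/f_p(t)$; whereas for $p\in(0,1]$ one has $h_p(t)/t\to\infty$, the critical gap is $[te^{-h_p(t)/t},t]$, of length $\sim t$, and the integrand genuinely varies, so the integral form is needed. In both regimes the definition of $h_p$ is calibrated precisely so that, writing $L^{\pm}_\delta(t)$ ($\delta\ge0$) for the gap whose length is $(1\pm\delta)$ times that of the critical gap (in linear scale when $p>1$, in log scale when $p\in(0,1]$),
\[
\int_{t-L^{\pm}_\delta(t)}^{t}\frac{\psi(f_p(s))}{f_p(s)}\D s=(1\pm\delta)\,p\log_2 t,\qquad\text{hence}\qquad \Phi\bigl(t,L^{\pm}_\delta(t)\bigr)=(\log t)^{-(1\pm\delta)p+o(1)}.
\]

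The central estimate would be established by localization, extending \citep{Debicki16} (cf.\ \autoref{lem:asymptotics}). Partition $[t-L,t]$ into consecutive blocks $I_i$ of a length $\ell$ slightly exceeding the correlation scale at the level $u=f_p(t)$ --- the scale governed, as in the definition of $m$, by the minimizer of $u(1+ct)/\sigma(ut)$ --- and group these into super-blocks separated by short spacer gaps. For each block, $\prob{\sup_{s\in I_i}Q_X(s)>f_p(s)}=(1+o(1))\,|I_i|\,\psi(f_p(s_i))/f_p(s_i)$ by the single-point tail asymptotics and Piterbarg-type localization; within a super-block a Bonferroni/second-moment estimate controls the pair over-counts, which are of smaller order because $Q_X$ at separated times is only weakly (and non-negatively) correlated; and the super-blocks are rendered asymptotically independent by replacing $Q_X$ with the finite-memory process $Q_X^{(M)}(s)=\sup_{s-M\le r\le s}(X(s)-X(r)-c(s-r))$ for a window $M$ large compared with $f_p(t)$ but small compared with the spacer gaps, the truncation cost $\prob{\sup_{s\in[t-L,t]}(Q_X(s)-Q_X^{(M)}(s))>0}$ being bounded by a remote-past tail estimate. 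Summing the logarithms of the super-block no-exceedance probabilities reproduces $\int_{t-L}^{t}\psi(f_p(s))/f_p(s)\D s$, with matching upper and lower bounds.

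Granting the central estimate, the bound $\liminf\ge-1$ (and its $\log$-version when $p\in(0,1]$) follows from a first Borel--Cantelli argument along a covering sequence. Fix $\varepsilon>0$ and choose $t_k\uparrow\infty$ whose consecutive points are close on the relevant scale: $t_k-t_{k-1}\le\varepsilon'h_p(t_k)$ when $p>1$, resp.\ $\log(t_k/t_{k-1})\le\varepsilon'h_p(t_k)/t_k$ when $p\in(0,1]$; in either case this forces $\log t_k\sim(c_{\varepsilon'}\,k\log k)^{1/p}$. If the $(1+\varepsilon)$-scaled critical gap ends at some $t\in[t_{k-1},t_k]$ then, since the spacing is fine relative to that gap and $f_p,h_p$ are regularly varying, it contains $G\bigl(t_{k-1},L^{+}_{\varepsilon-2\varepsilon'}(t_{k-1})\bigr)$, which by the calibration above has probability $(\log t_{k-1})^{-(1+\varepsilon-2\varepsilon')p+o(1)}$. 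Summing,
\[
\sum_k\prob{\exists\,t\in[t_{k-1},t_k]:\ \xi_{f_p}(t)<t-(1+\varepsilon)h_p(t)}\ \lesssim\ \sum_k(\log t_{k-1})^{-(1+\varepsilon-2\varepsilon')p+o(1)}\ \asymp\ \sum_k(k\log k)^{-(1+\varepsilon-2\varepsilon')},
\]
which converges whenever $\varepsilon'<\varepsilon/2$. Hence a.s.\ no $(1+\varepsilon)$-scaled critical gap ends at any large $t$, i.e.\ eventually $\xi_{f_p}(t)\ge t-(1+\varepsilon)h_p(t)$ (resp.\ $\log(\xi_{f_p}(t)/t)\ge-(1+\varepsilon)h_p(t)/t$); letting $\varepsilon\downarrow0$ gives $\liminf\ge-1$.

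The reverse bound $\liminf\le-1$ is the essential point and uses the second Borel--Cantelli lemma, for which the gap events along a \emph{sparse} sequence must be asymptotically independent. Fix $\varepsilon>0$ and take $t_k\uparrow\infty$ with $t_k-t_{k-1}\asymp h_p(t_k)$ when $p>1$, resp.\ $\log(t_k/t_{k-1})\asymp h_p(t_k)/t_k$ when $p\in(0,1]$ --- again $\log t_k\sim(c\,k\log k)^{1/p}$ --- the spacing chosen large enough that the memory windows $[t_k-L^{-}_\varepsilon(t_k)-M_k,\,t_k]$ are pairwise disjoint and well separated, with $f_p(t_k)\ll M_k\ll t_k-t_{k-1}$. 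The truncation error $\sum_k\prob{Q_X^{(M_k)}\ne Q_X\text{ on }[t_k-L^{-}_\varepsilon(t_k),t_k]}$ is summable; and since the normalized increments of $X$ over these windows have covariances that vanish as $k\to\infty$ --- this is where the regular variation of $\sigma^2$ at $\infty$ in \textbf{AI}, hence $\ddot{\sigma^2}(v)=O(v^{2\alpha_\infty-2})$, enters --- a Berman-type comparison makes $\bigl|\prob{A_j^{(M_j)}\cap A_k^{(M_k)}}-\prob{A_j^{(M_j)}}\prob{A_k^{(M_k)}}\bigr|$ summable over $j<k$, where $A_k^{(M_k)}$ is the truncated gap event at $t_k$. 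The Kochen--Stone form of Borel--Cantelli then applies to $\sum_k\prob{A_k^{(M_k)}}=\sum_k(\log t_k)^{-(1-\varepsilon)p+o(1)}\asymp\sum_k(k\log k)^{-(1-\varepsilon)}=\infty$, so infinitely often $\xi_{f_p}(t_k)<t_k-(1-\varepsilon)h_p(t_k)$ (resp.\ $\log(\xi_{f_p}(t_k)/t_k)<-(1-\varepsilon)h_p(t_k)/t_k$); $\varepsilon\downarrow0$ completes the argument. The hard part is the central estimate \emph{with the sharp constant $1$} --- a genuine Poisson-type approximation for the number of excursions of $Q_X$ above the slowly varying level $f_p(t)$ over a window of length of order $h_p(t)$, made uniform over the $t$-dependent ranges of $L$ and $M$ and over both regimes simultaneously; the single-point inputs come from \citep{Debicki16} (\autoref{lem:asymptotics}), but promoting them, together with the remote-past and decorrelation estimates, to such uniform growing-window statements is where the bulk of the work lies.
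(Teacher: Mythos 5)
Your skeleton matches the paper's: calibrate the gap-event probability to $(\log t)^{-(1\pm\varepsilon)p+o(1)}$ via the asymptotics of $\psi(f_p(u))/f_p(u)$, get $\liminf\ge-1$ from a convergent Borel--Cantelli argument along a dense sequence, and get $\liminf\le-1$ from the general (Kochen--Stone) Borel--Cantelli lemma along a sparse sequence, with the linear versus logarithmic dichotomy for $p>1$ and $p\in(0,1]$. The genuine gap is that everything hinges on your ``central estimate'', which you yourself defer as ``the bulk of the work'', and the mechanism you sketch for it does not work as stated. Replacing $Q_X$ by the finite-memory process $Q_X^{(M)}$ restricts the optimizing past but does not make the resulting suprema over disjoint windows independent, because $X$ has dependent increments: for $\alpha_\infty>1/2$ the increment correlations are positive and decay only polynomially in the separation, so ``asymptotic independence after truncation'', and your parenthetical claim that $Q_X$ at separated times is ``only weakly (and non-negatively) correlated'' (the sign claim fails for $\alpha_\infty<1/2$, and the ``weakly'' is unquantified exactly where it matters, $\alpha_\infty>1/2$), are precisely the assertions that need proof. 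What replaces them in the paper is the quantitative correlation bound \eqref{eq:rboundOrder} for the transformed field $Z_u(s,\tau)$, derived from \textbf{AI} via Taylor expansion and regular variation of $\ddot{\sigma^2}$, together with \eqref{eq:r1}--\eqref{eq:r2} for neighbouring blocks, fed into Berman's inequality; and since Berman's inequality is a finite-dimensional comparison, this in turn requires the discretization apparatus of \autoref{sec:disc} (\autoref{lem:discreate_approx}, \autoref{lem:disc_asymp}) and the strip restriction \eqref{eq:tauBigger}, none of which your sketch addresses.

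A second, related point: you aim for more than is needed and give no route to the harder half of it. The paper never establishes the sharp asymptotic $-\log\Phi(t,L)=(1+o(1))\int_{t-L}^{t}\psi(f_p(s))/f_p(s)\,\mathrm{d}s$; it proves the two one-sided bounds \autoref{lem:bound} and \autoref{lem:lbound}, each with a factor $(1\mp\varepsilon)$ and an additive error $K S^{-\rho}$, and these suffice for both Borel--Cantelli steps. The delicate half is the \emph{lower} bound on the no-exceedance probability, needed to make $\sum_k\mathbb{P}(A_k)=\infty$ in the divergence step: in the paper it comes from Berman's inequality applied to grid maxima at the slightly lowered level $m(y_i)-\theta_i^{\hat\alpha/2}/m(y_i)$, with \autoref{lem:discreate_approx} and \eqref{eq:tauBigger} transferring back to the continuous event; a Bonferroni bound only yields the upper half, and your independence-after-truncation claim is the only substitute you offer. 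Likewise the pairwise decorrelation along the sparse sequence (in the paper, the bound $\sup|r|\le K(T_k-T_{k-1})^{-\lambda/2}$ with $\lambda=1-\alpha_\infty$, multiplied by a count of grid points, making the Berman double sum summable) appears in your proposal only as ``a Berman-type comparison with vanishing covariances'', without the estimate that makes it summable. So the architecture is right, but the analytic core of the theorem is missing, and the one concrete new device you propose is not sound as described.
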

\autoref{thm:main} shows that for $t$ big enough,
there exists an $s$ in $[t - h_p(t), t]$  such that
$Q_{X}(s)\ge f_p(s)$ and that the length of the interval
$h_p(t)$ is smallest possible. \autoref{thm:equiv} and \autoref{thm:main} generalize the main results of \cite{Debicki17a}, which considered the special case when $X\equiv B_H$ is a fractional Brownian motion with any Hurst parameter $H\in(0,1)$; see also \citep{Shao92,Debicki17} for similar results for non-reflected Gaussian processes and Gaussian order statistics.
The organization of the rest of paper is as follows. The notation and examples of Gaussian processes $X$ that fall under our framework are displayed in \autoref{sec:notation} followed by
properties of the storage process $Q_X$ in \autoref{sec:prelim}. \autoref{sec:auxLem} gives two useful tools and some auxiliary lemmas for the proof of the main results which are presented in \autoref{sec:Proofs}.

\section{Notation and Special Cases}
\label{sec:notation}
We write $f(u)\sim g(u)$ if $\lim_{u\to \infty}f(u)/g(u)=1$.
By \rd{$\overleftarrow{\sigma}$ we denote the generalized inverse function to
${\sigma}$},
\rd{$\Psi$ denotes the tail distribution function} of the standard Normal random variable.
Function $f$ is ultimately monotone if there exists a constant $M>0$ such that
\rd{$f$} is monotone over $(M,\infty)$.
For a centered continuous Gaussian process \rd{with stationary increments}
$V=\{V(t): t\in\rr\}$, such that $V(0)=0$,
\begin{equation}
\label{sv}
\Cov(V({t}), V({s}))=\frac{\sigma_V^2({t})+\sigma_V^2({s})-\sigma_V^2({t}-{s})}{2},
\end{equation}
we introduce \textit{the generalized Pickands' constant} on a compact set $E\subset\rr^d$ as
\[
\mathcal{H}_V(E)=\ee\exp\left(\sup_{ t\in E}\left(\sqrt{2}V( t)-\sigma_{V}^2( t)\right)\right).
\]
Let
\[
 \mathcal{H}_V=\lim_{S\to\infty}\frac{\mathcal H_V([0,S])}{S}.
\]
We refer to \citep{Debicki14} for the finiteness of $\mathcal{H}_V(E)$ and to \citep{Debicki17b, Debicki17c} for the fact that $\mathcal H_V\in(0,\infty)$. Furthermore, see \citep{Debicki02,Dieker05a} for the analysis of other properties of Pickands'-type constants.

\subsection*{Special cases}
{\it \underline{Fractional Brownian motion}}. Let $B_H=\{B_H(t): t\geq 0\}$ denote fBm with Hurst index $H\in (0,1]$ which is a centered Gaussian processes with continuous sample paths and covariance function satisfying
\[
\Cov\left(B_H(t), B_H(s)\right)=\frac{|s|^{2H}+|t|^{2H}-|t-s|^{2H}}{2},\quad  s, t\geq 0.
\]
Direct calculations show that
\[
\sigma^2(t)=|t|^{2H}, \quad m(u)=Au^{1-H}, \quad A=\left(\frac{{H}}{c(1-{H})}\right)^{-H}\frac{1}{1-H}, \quad B=\left(\frac{H}{c(1-H)}\right)^{-H-2}H,\] \[ \overleftarrow{m}(u)=A^{-\frac{1}{1-H}}u^{\frac{1}{1-H}},\quad
f_p(u)=\left(\frac{2}{A^2}\left(\log u+\left(\frac{2-3H}{2H(1-H)}-p\right)\log_2 u\right)\right)
^{\frac{1}{2(1-H)}},
\]
\[
h_p(u)=p \mathscr C^{-1} u\log^{1-p} u \log_2u, \quad \mathscr C=\half (\mathcal{H}_{B_H})^2\sqrt{\frac{A}{B}}\left(\frac{\sqrt{2}(\tau^*)^{2H}}{1+c\tau^*}\right)
\left(
\frac{\sqrt{2}}{A}
\right)^{\frac{2-3H}{H(1-H)}},
\]
with $\tau^*=\frac{H}{c(1-H)}$.
This coincides with \citep[Theorem 1 and 2]{Debicki17a}.\\
{\it \underline{Short-range dependent Gaussian integrated processes}}. Let $X(t)=\int_0^t Y(s)\D s$ where $Y$ is a centered stationary Gaussian process with unit variance and correlation function $r(t)=\Cov(Y(s+t), Y(s)), s\geq 0, t\geq 0$. We say that $X$ possesses short-range dependence property if:
\begin{itemize}
\item[\textbf{S1}:] \rd{$r$ is a continuous function on $[0,\infty)$ such that, $\lim_{t\to \infty}tr(t)=0;$}
\item[\textbf{S2}:] \rd{$r$ is decreasing over $[0,\infty)$ and $\int_0^\infty r(t)\D t=\frac{1}{G}$ for some $0<G<\infty$};
\item[\textbf{S3}:] $\int_0^\infty s^2 |r(s)| \D s<\infty$.
\end{itemize}
The above assumptions go line by line the same as the assumptions in \cite{Debicki02} except a little modification. {\bf S1-S3} cover wide range of stationary Gaussian processes such as  the process with  correlation function
\[
r(t)=e^{-|t|^\alpha}, \quad \alpha\in (0,2].
\]
In particular if $r(t)=e^{-|t|}$, \rd{$X$} is the so-called Ornstein-Uhlenbeck process. Apparently, if {\bf S1-S3} are satisfied, then
\[
\sigma^2(t)=2\int_0^t\int_0^s r(v)\D v\D s\]
satisfies \textbf{AI-AII}. Note that
\[
\sigma^2(t)\sim t^2, \quad\text{as } t\to 0, \quad \sigma^2(t)\sim \frac{2}{G} t, \as t.
\]
\cite[Proposition 6.1]{Debicki02} shows that
\[
m^2(u)=2Gu+2G^2G_1+o(1), \as u,
\]
with \rd{$G_1=\int_0^\infty t r(t)\D t$}. This indicates that $m(u)$ can be replaced by $\widehat{m}(u)=\sqrt{2Gu+2G^2G_1}$ in \autoref{lem:asymptotics} and \autoref{thm:main}. Under this replacement, we have that
\[
\overleftarrow{\widehat{m}(u)}=\frac{u^2}{2G}-GG_1,\quad
 f_p(t)=\frac{1}{G}\left(\log t+\left(1-p\right)\log_2 t\right)-GG_1
\]
and
\[
h_p(u)=p \mathscr C^{-1} u\log^{1-p} u \log_2u, \quad \mathscr C=
\frac{2(\mathcal{H}_{\eta_{1/2}})^2\left(\overleftarrow{\sigma}(\frac{\sqrt{2}}{cG})\right)^{-2}}{A^{3/2}\sqrt{B}G},
\]
with $\eta_{1/2}=\frac{cG}{\sqrt{2}}X(\overleftarrow{\sigma}(\frac{\sqrt{2}}{cG})t), A=2c^{1/2},$ and $ B=\frac{1}{2}c^{5/2}$.

\section{Properties of the storage process}
\label{sec:prelim}
Before we present our auxiliary results, we need to introduce some notation and state some properties of the supremum of the  process $Q_{X}$ as derived in \citep{Piterbarg01, Husler04b}. We begin with the relation
\begin{equation}
\label{eq:basic_transformation}
\prob{\sup_{t\in[0,T]} Q_{X}(t)>u} =\prob{ \sup_{\substack{s\in[0,T/u] \\ \tau\ge0}} Z_u(s,\tau)>m(u)},\quad\text{for any}\quad T>0,
\end{equation}
where
\[
Z_{u}(s,\tau)= \frac{X(u(\tau+s))-X(us)}{u(1+c\tau)} m(u).
\]
Note that $Z_u(s,\tau)$ is a Gaussian field, stationary in $s$, but not in $\tau$.
The variance $\sigma_{u}^2(\tau)$ of $Z_u(s,\tau)$ equals $\frac{\sigma^2(u\tau)}{(u(1+c\tau))^2}m^2(u)$ and  $\sigma_{u}(\tau)$ has a single maximum point at $\tau(u)$
for $u$ sufficiently large with
$\lim_{u\to\infty}\tau(u)=\tau^*$, where
\begin{align}\label{tau}
\tau^* = \frac{\alpha_\infty}{c(1-\alpha_\infty)}.
\end{align}
\rd{Taylor's formula shows that, for each $u>0$ sufficiently large,
$$\sigma_u(\tau)=\sigma_u(\tau(u))+\dot{\sigma}_u(\tau(u))(\tau-\tau(u))+\frac{1}{2}\ddot{\sigma}_u(\xi)(\tau-\tau(u))^2$$
with $\xi\in (\tau,\tau(u))$. Noting that $\sigma_u(\tau(u))=1$,  for $u$ sufficiently large,
$\dot{\sigma}_u(\tau(u))=0$ and for  $\lim_{u\to\infty}\delta_u=0$
$$\lim_{u\to\infty}\sup_{|\tau-\tau(u)|<\delta_u}\left|\frac{1}{2}\ddot{\sigma}_u(\xi)-\frac{B}{2A}\right|=0,$$
we have }
\begin{align}
\label{variance}
\lim_{u\to\infty}\sup_{\tau\neq \tau(u), |\tau-\tau(u)|<\delta_u}
\left|\frac{1-\sigma_{u}(\tau)}{\frac{B}{2A}(\tau-\tau(u))^2}-1\right| =0
\end{align}
with $\lim_{u\to\infty}\delta_u=0$, where
\begin{align}\label{AB}
A=\left(\frac{{\alpha_\infty}}{c(1-{\alpha_\infty})}\right)^{-\alpha_\infty}\frac{1}{1-\alpha_\infty}, \quad
B=\left(\frac{\alpha_\infty}{c(1-\alpha_\infty)}\right)^{-\alpha_\infty-2}\alpha_\infty.
\end{align}

Let $r_{u,u'}(s,\tau,s',\tau')$ be the correlation function
of $Z_u(s,\tau)$ and $Z_{u'}(s', \tau')$. Then
\begin{align*}
r_{u,u'}&(s,\tau,s',\tau')\\
& =\frac{-\sigma^2(|us-u's' + u\tau -u'\tau'|)+\sigma^2(|us-u's'+u\tau|)+\sigma^2(|us - u's'-u'\tau'|)-\sigma^2(|us-u's'|)}{2\sigma(u\tau)\sigma(u'\tau')}.
\end{align*}
Denote by
$$r_u(s,\tau, s',\tau')=r_{u,u}(s,\tau,s',\tau').$$
Then Lemma 5.4 in \cite{Debicki16} gives that with $\delta_u>0$ and $\lim_{u\to\infty}\delta_u=0$
\begin{align}\label{cor0}
\lim_{u\to\infty}\sup_{(s,\tau)\neq (s',\tau'),|\tau-\tau(u)|, |\tau'-\tau(u)|, |s-s'|\leq \delta_u }\left|\frac{1-r_u(s,\tau, s',\tau')}{\frac{\sigma^2(u|s-s'+\tau-\tau'|)+\sigma^2(u|s-s'|)}{2\sigma^2(u\tau^*)}}-1\right|=0.
\end{align}

Now assume that
\begin{align}\label{res}\frac{u\tau +u'\tau'}{|us-u's'|}<\frac{1}{2},
\end{align}
 and without loss of generality, $us>u's'$. Then \rd{Taylor's formula} gives that
\[
r_{u,u'}(s,\tau,s',\tau')=\frac{-\ddot{\sigma^2}(|us-u's'+v_1-v_2|)u\tau u'\tau'}{2\sigma(u\tau)\sigma(u'\tau')},
\]
with $v_1\in (0,u\tau), v_2\in (0,u'\tau')$. 
Noting that by (\ref{res})
$$|us-u's'+v_1-v_2|\geq u\tau+ u'\tau, $$
 in light of \citep[Theorem 1.7.2]{Bingham87} and by \textbf{AI-AII} we have
\begin{align*}
\frac{|us-u's'+v_1-v_2|^2\ddot{\sigma^2}(|us-u's'+v_1-v_2|)}{\sigma^2(|us-u's'+v_1-v_2|)}\to 2\alpha_\infty(2\alpha_\infty-1), \as{u\tau, u'\tau'}.
\end{align*}
Hence
\begin{align}\label{ruu'}
r_{u,u'}(s,\tau,s',\tau')
\sim -\alpha_\infty(2\alpha_\infty-1)\left|\frac{\sqrt{u\tau u'\tau'}}{us-u's' +v_1-v_2}\right|^{2\lambda}
,\as{u\tau, u'\tau'},
\end{align}
where $\lambda = 1-\alpha_\infty >0$. This implies that for any $0<\epsilon<\half$  if
$$\frac{u\tau +u'\tau'}{|us-u's'|}<\epsilon,$$
then, for $u\tau$ and $u\tau'$ both sufficiently large,
\begin{equation}
\label{eq:rboundOrder}
|r_{u,u'}(s,\tau,s',\tau')|\leq (1-2\epsilon)^{2(\alpha_\infty-1)}\left|\frac{\sqrt{u\tau u'\tau'}}{us-u's'}\right|^{2\lambda}.
\end{equation}

Next we focus on the case when $u\sim u'$, $|s-s'|\leq M$ and $|\tau-\tau_0|, |\tau'-\tau^*|\leq \delta(u,u')$ with $\tau^*$ defined in \eqref{tau} and $\lim_{u,u'\to\infty}\delta(u,u')=0$. \rd{ In light of {\bf AI} and {\bf AII}, noting that $\sigma^2$ is bounded over any compact interval, using uniform convergence theorem in \cite{Bingham87} we have that, for $u\sim u'$,
\begin{align*}
&\lim_{u,u'\to\infty}\sup_{|s-s'|\leq M, |\tau-\tau^*|, |\tau'-\tau^*|\leq \delta(u,u')}\left|\frac{\sigma^2(|us-u's'+u\tau|)+\sigma^2(|us - u's'-u'\tau'|)}{\sigma^2(u)}\right.\\
&\quad \quad\quad \quad\quad \quad \left.-|s-s'+\tau^*|^{2\alpha_\infty}-|s-s'-\tau^*|^{2\alpha_\infty}\right|=0,\\
&\lim_{u,u'\to\infty}\sup_{|s-s'|\leq M, |\tau-\tau^*|, |\tau'-\tau^*|\leq \delta(u,u')}\left|\frac{\sigma^2(|us-u's' + u\tau -u'\tau'|)+\sigma^2(|us-u's'|)}{\sigma^2(u)}-2|s-s'|^{2\alpha_\infty}\right|=0,\\
&\lim_{u,u'\to\infty}\sup_{|s-s'|\leq M, |\tau-\tau^*|, |\tau'-\tau^*|\leq \delta(u,u')}\left|\frac{\sigma(u\tau)\sigma(u\tau')}{\sigma^2(u)}-|\tau^*|^{2\alpha_\infty}\right|=0.
\end{align*}}
Hence  for $u\sim u'$
\begin{align}\label{ruu}
\lim_{u,u'\to\infty}\sup_{|s-s'|\leq M, |\tau-\tau^*|, |\tau'-\tau^*|\leq \delta(u,u')}\left|r_{u,u'}(s,\tau,s',\tau')-
g(s-s')\right|=0,
\end{align}
with
$$g(t)=\frac{|t+\tau^*|^{2\alpha_\infty}+|t-\tau^*|^{2\alpha_\infty}-2|t|^{2\alpha_\infty}}{2(\tau^*)^{2\alpha_\infty}}.$$
Note that $g(0)=1$ and for any $0<\delta<1$, there exists $0<c_\delta<1/2$ such that
\begin{align}\label{g(t)}\inf_{|t|<c_{\delta}}g(t)>\delta, \quad \sup_{|t|>\delta}g(t)<1-c_\delta.
\end{align}
The proof of (\ref{g(t)}) is postponed to Appendix.
Following (\ref{ruu}) and (\ref{g(t)}) , we have that  with $u\sim u'$, for $u$ sufficiently large,
\begin{align}
\label{eq:r1}
\inf_{|s-s'|<c_{\delta}, |\tau-\tau^*|, |\tau'-\tau^*|\leq \delta(u,u') }r_{u,u'}(s,\tau, s',\tau')&>\delta/2,\\
\label{eq:r2}
\sup_{|s-s'|>\delta, |\tau-\tau^*|, |\tau'-\tau^*|\leq \delta(u,u')}r_{u,u'}(s,\tau, s',\tau')&<1-c_\delta/2<1.
\end{align}
\subsection{Asymptotics}
Let $\tau^*(u)=(\log m(u))/m(u)$ and $J(u)=\{\tau: |\tau-\tau(u)|\le \tau^*(u)\}$.
Due to the following lemma, while analyzing tail asymptotics of the supremum of $Z_u$, we can restrict the considered domain of $(s,\tau)$ to a strip $J(u)$.
\begin{lem}[\cite{Debicki16}, Lemma 5.6 and Theorem 3.3]
\label{lem:asymptotics}
There exists a positive constant $C$ such that for any $v,T > 0$,
\begin{equation}
\label{eq:tauBigger}
\prob{ \sup_{(s,\tau)\in[0,T]\times (J(u))^c} Z_u(s,\tau)>m(u)}
\le
C T
\frac{u^{\gamma}}{m(u)}\Psi(m(u))
\exp\left(- \frac{b}{4} \log^2 (m(u))\right),
\end{equation}
where $b=B/(2A)$. Furthermore,
for any $T>0$ such that, there exist $c\in(0,\half)$ and $H'\in(-\gamma/2,0)$, such that $u^{H'} <T< \exp(c m^2(u))$ for $u$ sufficiently large,
\begin{equation}
\label{eq:tauSmaller}
\prob{ \sup_{(s,\tau)\in[0,T]\times J(u)} Z_u(s,\tau)>m(u)}
=
(\mathcal{H}_{\eta_{\alpha_\infty}})^2\sqrt{\frac{2A\pi}{B}}\zeta_{\alpha_\infty} T\frac{u^{\gamma}}{m(u)}
\Psi(m(u))(1+o(1)),
\end{equation}
where
\[
\eta_{\alpha_\infty}(t)=
\left\{\begin{array}{cc}
	B_{\alpha_\infty}(t) & \alpha_\infty>1/2\\
	\frac{1+c\tau^*}{\sqrt{2}A_\infty \tau^*}
	X(\overleftarrow{\sigma}(\frac{\sqrt{2}A_\infty \tau^*}{1+c\tau^*}) t) & \alpha_\infty=1/2\\
	B_{\alpha_0}(t)& \alpha_\infty<1/2
\end{array}\right.,
\quad
\zeta_{\alpha_\infty}=
\left\{\begin{array}{cc}
\left(\frac{\sqrt{2A_\infty}(\tau^*)^{2\alpha_\infty}}{1+c\tau^*}\right)^{-2/\alpha_\infty}& \alpha_\infty>1/2\\
\left(\overleftarrow{\sigma}\left(\frac{\sqrt{2}A_\infty \tau^*}{1+c\tau^*}\right)\right)^{-2}& \alpha_\infty=1/2\\
\left(\frac{\sqrt{2}A_\infty (\tau^*)^{2\alpha_\infty}}{\sqrt{A_0}(1+c\tau^*)}\right)^{-2/\alpha_0}& \alpha_\infty<1/2
\end{array}\right.,
\]
with $\gamma$ defined in \eqref{eq:fp&gamma} and $\tau^*$ given by \eqref{tau}.
\end{lem}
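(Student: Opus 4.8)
The plan is to prove both estimates by Pickands' double-sum method, exploiting that by \eqref{variance} the variance of $Z_u$ attains its maximal value $1$ only on the one-dimensional ridge $\{\tau=\tau(u)\}$ and decays quadratically away from it, at rate $b=B/(2A)$ as in the statement. Estimate \eqref{eq:tauBigger} is precisely the assertion that the part of the range with $\tau$ outside the strip $J(u)$ is negligible. Here I would cover $[0,T]\times(J(u))^{c}$ (in which, since $\sigma_u^2(\tau)\to0$ as $\tau\to\infty$, only a bounded range of $\tau$ matters) by boxes of side $\delta_u\to0$ on which, by \textbf{AI}--\textbf{AII} and the increment identities underlying \eqref{cor0}, $Z_u$ obeys a uniform H\"older/metric-entropy bound; on each box one applies the Borell--TIS inequality together with a Piterbarg-type tail bound at the effective level, which by \eqref{variance} is at least $m(u)+b(\log m(u))^2/m(u)\,(1+o(1))$ on $(J(u))^{c}$. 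Summing over a covering by such boxes and optimising $\delta_u$ produces the polynomial prefactor $Tu^{\gamma}/m(u)$, while $\Psi\bigl(m(u)+b(\log m(u))^2/m(u)\bigr)/\Psi(m(u))$ produces the Gaussian penalty, the exponent constant being lowered from $b$ to $b/4$ to absorb the prefactor and entropy corrections.

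For the sharp asymptotics \eqref{eq:tauSmaller}, having reduced the domain to $[0,T]\times J(u)$, I would tile it by cells $I_k\times I_l$ of side $S\Delta(u)$ in each variable, $S$ a large fixed parameter sent to $\infty$ at the end, and $\Delta(u)$ the Pickands scale: chosen so that $m^2(u)\bigl(1-r_u(s,\tau,s',\tau')\bigr)$ is of unit order when $|s-s'|$ or $|\tau-\tau'|$ is of order $\Delta(u)$. By \textbf{AI}--\textbf{AII}, the exponent governing the local structure of $Z_u$ on this scale is $\alpha_\infty$ when $u\Delta(u)\to\infty$ (i.e.\ $\alpha_\infty>1/2$), is intrinsic to $X$ at a bounded scale when $u\Delta(u)$ stays bounded ($\alpha_\infty=1/2$), and is $\alpha_0$ when $u\Delta(u)\to0$ ($\alpha_\infty<1/2$) --- exactly the three cases in the definition of $\eta_{\alpha_\infty}$. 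Since $m(u)\Delta(u)\to0$ while $J(u)$ has width of order $\log m(u)/m(u)$, on a single cell the variance of $Z_u$ is essentially the constant $\sigma_u^2(\tau_l)$, and after rescaling $(s,\tau)=(s_k+t_1\Delta(u),\tau_l+t_2\Delta(u))$ the field conditioned on an exceedance at level $m(u)$ converges, by \eqref{cor0} (with \eqref{ruu}--\eqref{g(t)} for the non-degeneracy), to $\sqrt2\bigl(\eta^{(1)}_{\alpha_\infty}(t_1)+\eta^{(2)}_{\alpha_\infty}(t_1+t_2)\bigr)$ minus its variogram, where $\eta^{(1)},\eta^{(2)}$ are independent copies of $\eta_{\alpha_\infty}$ (the two endpoints of the increment of $X$ defining $Z_u$). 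After the change of variables $(t_1,t_2)\mapsto(t_1,t_1+t_2)$, under which a box becomes a parallelogram of the same area, the corresponding generalised Pickands constant therefore factorises, as $S\to\infty$, into $(\mathcal H_{\eta_{\alpha_\infty}})^2$ per unit area.

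It then remains to reassemble the pieces. The residual quadratic decay of the variance in $\tau$, invisible within a single cell, becomes, once the cells $I_l$ are summed over the whole strip $J(u)$, the Gaussian integral $\int_{\rr}e^{-b\,m^2(u)v^2}\D v$, contributing $\sqrt{2A\pi/B}\,m(u)^{-1}$; the mismatch between the normalisation of $\eta_{\alpha_\infty}$ and the scale $\Delta(u)$ (including the factor $2$ between the $s$- and $\tau$-variograms in \eqref{cor0}) contributes $\zeta_{\alpha_\infty}$; and the number of cells contributes $Tu^{\gamma}$, giving altogether $(\mathcal H_{\eta_{\alpha_\infty}})^2\sqrt{2A\pi/B}\,\zeta_{\alpha_\infty}\,Tu^{\gamma}m(u)^{-1}\Psi(m(u))(1+o(1))$, as claimed. (Both estimates are Lemma~5.6 and Theorem~3.3 of \cite{Debicki16}.)

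Finally one must justify that the double sum collapses to a single sum, i.e.\ that cross-cell contributions are negligible: for pairs of cells with $|s-s'|$ of macroscopic order this follows from \eqref{eq:r2}, and for genuinely distant cells from \eqref{eq:rboundOrder}, which bounds the correlation by a fixed negative power of $|s-s'|$. The hypothesis $T<\exp(cm^2(u))$ ensures that the $O\bigl((T/\Delta(u))^2\bigr)$ cross-terms, each of order $\exp(-(1-\varepsilon)m^2(u))$, sum to $o\bigl(Tu^{\gamma}m(u)^{-1}\Psi(m(u))\bigr)$, whereas $T>u^{H'}$ with $H'>-\gamma/2$ guarantees $T/\Delta(u)\to\infty$, so that the per-cell Pickands asymptotics are legitimate. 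I expect the main obstacle to be the second paragraph: identifying the limit field and establishing the factorisation of its Pickands constant into $(\mathcal H_{\eta_{\alpha_\infty}})^2$, and simultaneously keeping track of the two disparate scales in the $\tau$-direction --- the Pickands scale $\Delta(u)$, on which $\tau$ supplies one of the two factors $\mathcal H_{\eta_{\alpha_\infty}}$, versus the much coarser scale $1/m(u)$, on which the variance decay supplies the Gaussian integral --- while keeping the convergences in \eqref{variance} and \eqref{cor0} uniform over the whole $u$-dependent domain $[0,T]\times J(u)$.
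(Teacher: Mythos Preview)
The paper does not prove this lemma: it is quoted verbatim from \cite{Debicki16} (Lemma~5.6 and Theorem~3.3 there), with no argument given here beyond the preparatory correlation and variance expansions \eqref{variance}--\eqref{eq:rboundOrder}. Your outline is the standard double-sum/Pickands method and is, as far as it goes, the correct strategy --- indeed it is essentially the route taken in \cite{Debicki16}; so there is nothing substantive to compare against within the present paper.
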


\subsection{Discretization}
\label{sec:disc}
For a fixed $T,\theta>0$ and some $u>0$, let us define a discretization of the set $[0,T]\times J(u)$ as follows
\begin{align*}
s_{l} &= lq(u),\quad 0\le l\le L,\quad L = [T/q(u)],\quad q(u) = \theta \frac{\Delta(u)}{u}, \quad \Delta(u)=\overleftarrow{\sigma}\left(\frac{\sqrt{2}\sigma^2(u\tau^*)}{u(1+c\tau^*)}\right)\\
\tau_n &= \tau(u)+ nq(u),\quad 0\le |n|\le N,\quad N = [\tau^*(u)/q(u)], \quad E_{l,n}(u)=[s_l,s_{l+1}]\times[\tau_n, \tau_{n+1}].
\end{align*}
Along the similar lines as in \citep[Lemma 6]{Husler04b} we get the following lemma.
\begin{lem}
\label{lem:discreate_approx}
There exist positive constants $K_1,K_2,u_0>0$, such that, for any $\theta=\theta(u)>0$ with $\lim_{u\to\infty}\theta(u)=0$, $u\ge u_0$ and  $\eta\in (0, \min(\alpha_0,\alpha_\infty))$
\begin{align*}
\mathbb{P}\left(\max_{\substack{0\le l\le L\\0\le |n|\le N}}Z_u(s_l,\tau_n)\le
m(u) - \frac{\theta^{\eta}}{m(u)},
\sup_{\substack{s\in[0,T]\\\tau\in J(u)}} Z_u(s,\tau)>m(u)\right)
\le
K_1\frac{u^{\gamma}}{m(u)}\Psi(m(u))e^{-\frac{\theta^{-2H}}{K_2}}
\end{align*}
with $H\in (0, \min(\alpha_0,\alpha_\infty)-\eta)$.
\end{lem}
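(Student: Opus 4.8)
The plan is to run the Piterbarg--Hüsler double-sum discretisation, along the lines of \citep[Lemma~6]{Husler04b}. Write $m=m(u)$, $g=\theta^{\eta}/m$, let $v_{l,n}=(s_l,\tau_n)$ be a grid point and $\widetilde E_{l,n}=[s_l-q(u),s_l+q(u)]\times[\tau_n-q(u),\tau_n+q(u)]$ the box of radius $q(u)$ centred at it. First I would observe that if $(s,\tau)\in[0,T]\times J(u)$ satisfies $Z_u(s,\tau)>m$ and $v_{l,n}$ is the grid point nearest to $(s,\tau)$ (so $(s,\tau)\in\widetilde E_{l,n}$, and these boxes cover $[0,T]\times J(u)$), then on the event in the statement, where $Z_u(v_{l,n})\le m-g$, the box $\widetilde E_{l,n}$ simultaneously contains an exceedance of the level $m$ and carries an oscillation $\sup_{\widetilde E_{l,n}}Z_u-Z_u(v_{l,n})>g$. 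Hence the event is contained in $\bigcup_{l,n}A_{l,n}$, $A_{l,n}=\{\sup_{\widetilde E_{l,n}}Z_u>m,\ Z_u(v_{l,n})\le m-g\}$, the union over $0\le l\le L$, $|n|\le N$, and it suffices to bound $\prob{A_{l,n}}$ uniformly and sum. (When $\theta(u)$ stays above an absolute threshold one discards the above and bounds the event trivially by $\prob{\sup_{[0,T]\times J(u)}Z_u>m}$ and \eqref{eq:tauSmaller}, absorbing the result into $K_1$; so below I may assume $\theta$ small, i.e.\ $u\ge u_0$.)

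The single-box estimate is the core. Fix $\varepsilon\in\big(0,\min(\alpha_0,\alpha_\infty)-\eta-H\big)$ and put $\beta=\min(\alpha_0,\alpha_\infty)-\varepsilon$, so $\beta-\eta>H$ (this is where the hypotheses on $\eta,H$ enter). Rescaling $\widetilde E_{l,n}$ onto $[0,1]^2$, I would first prove the uniform bounds $\mathrm{Var}\,\chi(t)\le 1$ and $\mathbb{E}[(\chi(t)-\chi(s))^2]\le C\,\theta^{2\beta}m^{-2}|t-s|^{2\beta}$ for the rescaled field $\chi$, valid for $u\ge u_0$, all boxes inside $[0,T]\times J(u)$ and all $\theta\in(0,1)$: the first is immediate from $\mathrm{Var}\,Z_u(s,\tau)=\sigma_u^2(\tau)\le\sigma_u^2(\tau(u))=1$ on $J(u)$; for the second one writes the increment variance as $(\sigma_u(\tau)-\sigma_u(\tau'))^2+2\sigma_u(\tau)\sigma_u(\tau')(1-r_u)$, bounds the first summand through \eqref{variance} (it is $O((\log m)^2q(u)^2/m^2)$, negligible against $\theta^{2\beta}/m^2$ because $\Delta(u)^2/u^2\to0$ polynomially), and the second through \eqref{cor0}, \textbf{AI}--\textbf{AII} and the uniform convergence theorem for regularly varying functions \citep{Bingham87}, using $\sigma^2(\Delta(u))/\sigma^2(u\tau^*)\sim 2/m^2$. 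Granting this, the Piterbarg-type inequality prepared in \autoref{sec:auxLem}, applied after conditioning on the central value $\chi(0)=Z_u(v_{l,n})$, should yield
\[
\prob{A_{l,n}}\le \mathbb C\,\Psi\!\left(\frac{m}{\sigma_{l,n}}\right)\exp\!\left(-\frac{g^{2}m^{2}}{K_2'\,\theta^{2\beta}}\right)=\mathbb C\,\Psi\!\left(\frac{m}{\sigma_{l,n}}\right)\exp\!\left(-\frac{\theta^{-2(\beta-\eta)}}{K_2'}\right),
\]
with $\sigma_{l,n}=\max_{\widetilde E_{l,n}}\sigma_u(\tau)$ and $\mathbb C,K_2',u_0$ independent of $l,n,\theta$; the exponential is the Gaussian cost of forcing, given the small central value, an excursion of size $\ge g$ over a set whose increments have standard deviation $\lesssim\theta^{\beta}/m\ll g$.

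It remains to sum. By \eqref{variance} one has $\sigma_{l,n}\le 1-\tfrac b4\big((|n|-1)_+q(u)\big)^2$ for $u$ large, hence $\Psi(m/\sigma_{l,n})\le\Psi(m)\exp\!\big(-\tfrac b4 m^2\big((|n|-1)_+q(u)\big)^2\big)$; since $m^2q(u)^2\to0$ the sum over $n$ is a fine Gaussian sum bounded by $C\,m^{-1}q(u)^{-1}\Psi(m)$, and summing over the $\lesssim T/q(u)$ values of $l$ adds a factor $T/q(u)$. Inserting $q(u)=\theta\Delta(u)/u$ together with the elementary identity $u^2/\Delta(u)^2\asymp u^{\gamma}$ — which one verifies directly from the definition of $\Delta(u)$ in each of the regimes $\alpha_\infty\ge1/2$ and $\alpha_\infty<1/2$, recovering exactly the two cases of $\gamma$ in \eqref{eq:fp&gamma} — one obtains
\[
\sum_{l,n}\prob{A_{l,n}}\le \frac{C\,T\,u^{\gamma}}{m}\,\Psi(m)\,\frac{1}{\theta^{2}}\exp\!\left(-\frac{\theta^{-2(\beta-\eta)}}{K_2'}\right).
\]
Since $\beta-\eta>H$ and $\theta=\theta(u)\to0$, for $u\ge u_0$ we have $\theta^{-2}\exp(-\theta^{-2(\beta-\eta)}/K_2')\le\exp(-\theta^{-2H}/K_2)$ with a suitable $K_2$, and the claim follows with $K_1=CT$.

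The hard part will be the uniform Hölder estimate for $\chi$: because $\theta=\theta(u)$ is allowed to vanish at an arbitrary rate one may not fix $\theta$ and let $u\to\infty$, but must control the two competing scales — the correlation scale $\Delta(u)/u$ and the variance scale $1/m(u)$, the former being the smaller — uniformly across all regimes permitted by \textbf{AI}--\textbf{AII} (near $0$ via $\alpha_0$, near $\infty$ via $\alpha_\infty$), which is precisely where Potter's bounds and the strict inequality $\beta<\min(\alpha_0,\alpha_\infty)$ are needed. The other delicate point is arranging the single-box bound so that the factor $\Psi(m)$ and the vanishing factor $\exp(-\theta^{-2(\beta-\eta)}/K_2')$ are produced at the same time, which is exactly what the Piterbarg-type inequality invoked above is designed to do.
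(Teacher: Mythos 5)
Your proposal is correct and follows essentially the same route as the paper's proof: a box-by-box decomposition, conditioning on the value at the nearest grid point, a uniform H\"older bound on the increments at scale $q(u)$ obtained from \textbf{AI}--\textbf{AII} via regular variation (with the strict inequality $\beta<\min(\alpha_0,\alpha_\infty)$), Piterbarg's inequality producing the per-box factor $\exp(-c\,\theta^{-2(\beta-\eta)})$ alongside $\Psi(m/\sigma_{l,n})$, and finally a Gaussian summation in $\tau$, a count $T/q(u)$ in $s$, and the identity $u^2/\Delta^2(u)\asymp u^{\gamma}$. The one step you leave implicit --- that after conditioning the conditional mean over the box deviates from the conditioned value by only $o(\theta^{\eta})/m(u)$, so the residual field really must climb by $\approx y/m(u)$ --- is exactly the drift estimate the paper carries out using $\dot{\sigma}_u(\tau(u))=0$ and the polynomial decay of $m(u)\Delta(u)\log m(u)/u$, i.e.\ the same scale comparison you already invoke for the increment variance.
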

\begin{proof}
Conditioning on $Z_u(s_l,\tau_n)=m(u)-\frac{\theta^\eta}{m(u)}$, we have for $u$ sufficiently large
\begin{align*}
 \mathbb{P}& \left(Z_u(s_l,\tau_n)\leq m(u)-\frac{\theta^\eta}{m(u)}, \sup_{(s,\tau)\in E_{l,n}(u)}Z_u(s,\tau)>m(u)\right)\\
& =\int_{\theta^\eta}^\infty\frac{1}{\sqrt{2\pi}m(u)\sigma_u(\tau_n)}e^{-\frac{(m(u)-y/m(u))^2}{2\sigma_u^2(\tau_n)}}
\mathbb{P}\left(\sup_{(s,\tau)\in E_{l,n}(u)}Z_u(s,\tau)>m(u) \Bigl\lvert Z_u(s_l,\tau_n)=m(u)-\frac{y}{m(u)}\right)\D y\\
& \leq \frac{K}{\sqrt{2\pi}m(u)}e^{-\frac{m^2(u)}{2\sigma_u^2(\tau_n)}}\int_{\theta^\eta}^\infty e^{2y}
\mathbb{P}\left(\sup_{(s,\tau)\in E_{l,n}(u)}Z_u(s,\tau)-m(u)>0 \Bigl\lvert Z_u(s_l,\tau_n)=m(u)-\frac{y}{m(u)}\right) \D y.
\end{align*}
Moreover,
\begin{align*}
Z_u(s,\tau)-m(u)\Bigl\lvert Z_u(s_l,\tau_n)=m(u)-\frac{y}{m(u)}
\stackrel{d}{=}Y_u(s,\tau)+h(u,y),
\end{align*}
holds for $(s,\tau)\in E_{l,n}(u)$, where
\begin{align*}
Y_u(s,\tau)&=Z_u(s,\tau)-r_u(s,\tau,s_l,\tau_n)\frac{\sigma_u(\tau)}{\sigma_u(\tau_n)}Z_u(s_l,\tau_n),\\
h(u,y)&=r_u(s,\tau,s_l,\tau_n)\frac{\sigma_u(\tau)}{\sigma_u(\tau_n)}\left(m(u)-\frac{y}{m(u)}\right)-m(u).
\end{align*}
Taylor's formula gives that
\begin{align*}
m(u)h(u,y)
&=-m^2(u)(1-r_u(s,\tau,s_l,\tau_n))
-m^2(u)r_u(s,\tau,s_l,\tau_n)\left(1-\frac{\sigma_u(\tau)}{\sigma_u(\tau_n)}\right)-r_u(s,\tau,s_l,\tau_n)\frac{\sigma_u(\tau)}{\sigma_u(\tau_n)}y\\
&\leq-m^2(u)r_u(s,\tau,s_l,\tau_n)\frac{\dot{\sigma}_u(\tau)(\tau_n-\tau)+(1/2)\ddot{\sigma}_u(v)(\tau-\tau_n)^2}{\sigma_u(\tau_n)}-
r_u(s,\tau,s_l,\tau_n)\frac{\sigma_u(\tau)}{\sigma_u(\tau_n)}y,
\end{align*}
with $v\in (\tau_n, \tau)$.
Using the fact that $\dot{\sigma}_u(\tau(u))=0$ and $\sup_{\tau\in J(u)}|\ddot{\sigma}_u(\tau)|\leq \frac{2B}{A} $ for $u$ sufficiently large, by Taylor's formula, we have
\begin{align*}
m^2(u)|\dot{\sigma}_u(\tau)(\tau_n-\tau)|&= m^2(u)|(\dot{\sigma}_u(\tau)-\dot{\sigma}_u(\tau(u)))(\tau_n-\tau)|\\
&= m^2(u)|\tau_n-\tau||\ddot{\sigma}(v_1)(\tau(u)-\tau)|\\
&\leq\frac{2B}{A}m^2(u)q(u)\tau^*(u)\\
&=\theta \frac{2B}{A}\frac{m(u)\Delta(u)}{u}\log m(u),
\end{align*}
with $v_1\in (\tau, \tau(u))$.
Note that by {\bf AI-AII}
$$\frac{m(u)\Delta(u)}{u}\log m(u)\sim \mathbb{Q}u^{v_2}\log u, \quad \text{ with} \quad v_2=\left\{\begin{array}{cc}
2-\alpha_\infty-1/\alpha_\infty& \alpha_\infty\geq 1/2\\
\frac{2\alpha_\infty-1}{\alpha_0}-\alpha_\infty& \alpha_\infty<1/2
\end{array}\right.. $$
Since $v_2<0$ for all $\alpha_\infty\in (0,1]$, then
\begin{align*}
m^2(u)|\dot{\sigma}_u(\tau)(\tau_n-\tau)|=o(\theta), \quad u\to\infty.
\end{align*}
\begin{align*}
m^2(u)|\ddot{\sigma}_u(\theta)(\tau-\tau_n)^2|\leq K\left(\frac{m(u)\Delta(u)}{u}\right)^2 \theta^2=o(\theta^2).
\end{align*}
Due to the fact that $y\geq \theta^\eta$ with $0<\eta<1$, we have
\begin{align*}
h(u,y)\leq -y(1+o(1)).
\end{align*}
Consequently, for $u$ sufficiently large
\begin{align*}
\mathbb{P}&\left(\sup_{(s,\tau)\in E_{l,n}(u)}Z_u(s,\tau)-m(u)>0 \Bigl\lvert Z_u(s_l,\tau_n)=m(u)-\frac{y}{m(u)}\right)\\
&\leq \mathbb{P}\left(\sup_{(s,\tau)\in [0,1]^2}m(u)\frac{Y_u(s_l+qs,\tau_n+q\tau)}{\sigma_u(\tau_n+q\tau)}>\frac{y}{\sup_{\tau\in [0,1]}\sigma_u(\tau_n+q\tau)}(1+o(1)) \right)\\
&\leq \mathbb{P}\left(\sup_{(s,\tau)\in [0,1]^2}m(u)\frac{Y_u(s_l+qs,\tau_n+q\tau)}{\sigma_u(\tau_n+q\tau)}>\frac{y}{2} \right)
\end{align*}
\rd{
 By (\ref{cor0})  for  $u$ large enough
\begin{align*}
m^2(u)&\Var\left(\frac{Y_u(s_l+qs,\tau_n+q\tau)}{\sigma_u(\tau_n+q\tau)}-\frac{Y_u(s_l+qs',\tau_n+q\tau')}{\sigma_u(\tau_n+q\tau')}\right)\\
&\leq8 m^2(u)(1-r_u(s_l+qs,\tau_n+q\tau,s_l+qs',\tau_n+q\tau'))\\
&\leq 16 m^2(u)\frac{\sigma^2(uq(u)|s-s'|)+\sigma^2(uq(u)|s+\tau-s'-\tau'|)}{2\sigma^2(u\tau^*)}\\
& \leq K\frac{\sigma^2(\Delta(u)\theta|s-s'|)+\sigma^2(\Delta(u)\theta|s+\tau-s'-\tau'|)}{\sigma^2(\Delta(u))}\\
& \leq K\left(\frac{h(\Delta(u)\theta|s-s'|)}{h(\Delta(u))}\theta^{2\eta'}|s-s'|^{2\eta'}+
\frac{h(\Delta(u)\theta|s+\tau-s'-\tau'|)}{h(\Delta(u))}\theta^{2\eta'}|s+\tau-s'-\tau'|^{2\eta'}\right)\\
&\leq K\left(\frac{h(\Delta(u)\theta|s-s'|)}{h(\Delta(u))}+
\frac{h(\Delta(u)\theta|s+\tau-s'-\tau'|)}{h(\Delta(u))}\right)\theta^{2\eta'}(|s-s'|^{2\eta'}+|\tau-\tau'|^{2\eta'}), \quad s,s',\tau, \tau'\in [0,1],
\end{align*}
where   $h(t)=\frac{\sigma^2(t)}{t^{2\eta'}}$ and $\eta'\in (\eta,\min(\alpha_0,\alpha_\infty)).$ Then it follows from {\bf AI} and {\bf AII} that $h(t)>0, t>0$ is a regularly varying function at both $0$ and $\infty$ with indices $2(\alpha_0-\eta')>0$ and $2(\alpha_\infty-\eta')>0$ respectively; see \cite{Bingham87} for the definition and properties of regularly varying functions. Next we focus on the boundedness of  $\sup_{s,s'\in[0,1]}\frac{h(\Delta(u)\theta|s-s'|)}{h(\Delta(u))}$.
If $\lim_{u\to\infty} \Delta(u)=\infty $, noting that  $h$  is bounded over any compact interval, then
 uniform convergence theorem in \cite{Bingham87} gives that
 $$\lim_{u\to\infty}\sup_{s,s'\in[0,1]}\left|\frac{h(\Delta(u)\theta|s-s'|)}{h(\Delta(u))}
 -(\theta|s-s'|)^{2(\alpha_\infty-\eta')}\right|=0,$$
 implying that there exists $K_1>0$ such that for $u$ large enough
 $$\sup_{s,s'\in[0,1]}\frac{h(\Delta(u)\theta|s-s'|)}{h(\Delta(u))}<K_1.$$
 For the case $\lim_{u\to\infty} \Delta(u)=0$, uniform convergence theorem in \cite{Bingham87} can similarly show that the above argument holds. For $\lim_{u\to\infty} \Delta(u)\in (0,\infty)$, it is obvious that
 $$\lim_{u\to\infty}\sup_{s,s'\in[0,1]}h(\Delta(u)\theta|s-s'|)=0, \quad \lim_{u\to\infty}h(\Delta(u))\in (0,\infty).$$
 Thus the boundedness of $\sup_{s,s'\in[0,1]}\frac{h(\Delta(u)\theta|s-s'|)}{h(\Delta(u))}$ also holds.
 The boundedness of $\sup_{s,s',\tau,\tau'\in[0,1]}\frac{h(\Delta(u)\theta|s+\tau-s'-\tau'|)}{h(\Delta(u))}$ can be given similarly.
 Thus we have that
 \begin{align*}
 m^2(u)&\Var\left(\frac{Y_u(s_l+qs,\tau_n+q\tau)}{\sigma_u(\tau_n+q\tau)}-\frac{Y_u(s_l+qs',\tau_n+q\tau')}{\sigma_u(\tau_n+q\tau')}\right)\\
&\leq K\theta^{2\eta'}(|s-s'|^{2\eta'}+|\tau-\tau'|^{2\eta'}), \quad s,s',\tau, \tau'\in [0,1],
\end{align*}
with $\eta'\in (\eta,\min(\alpha_0,\alpha_\infty)).$}
  Similarly
\begin{align*}
\sup_{s,\tau\in [0,1]}m^2(u)\Var\left(\frac{Y_u(s_l+qs,\tau_n+q\tau)}{\sigma_u(\tau_n+q\tau)}\right)\leq K\theta^{2\eta'}.
\end{align*}
Hence
in light of Piterbarg inequality (\citep[Theorem 8.1]{Piterbarg96} or \citep[Lemma 5.1]{Debicki16}), we have for $u$ sufficiently large
\begin{align*}
\mathbb{P}&\left(\sup_{(s,\tau)\in [0,1]^2}m(u)\frac{Y_u(s_l+qs,\tau_n+q\tau)}{\sigma_u(\tau_n+q\tau)}>\frac{y}{2} \right)\\
&\mathbb{P}\left(\sup_{(s,\tau)\in [0,1]^2}\theta^{-\eta'}m(u)\frac{Y_u(s_l+qs,\tau_n+q\tau)}{\sigma_u(\tau_n+q\tau)}>\frac{y}{2}\theta^{-\eta'} \right)\\
& \leq K_1(y\theta^{-\eta'})^{2/\eta'-1}e^{-\frac{(y\theta^{-\eta'})^2}{K}}.
\end{align*}
Consequently,
\begin{align*}
\mathbb{P}&\left(Z_u(s_l,\tau_n)\leq m(u)-\frac{\theta^\eta}{m(u)}, \sup_{(s,\tau)\in E_{l,n}(u)}Z_u(s,\tau)>m(u)\right)\\
& \leq\frac{K_1}{\sqrt{2\pi}m(u)}e^{-\frac{m^2(u)}{2\sigma_u^2(\tau_n)}}\int_{\theta^\eta}^\infty e^{2y}
(y\theta^{-\eta'})^{2/\eta'-1}e^{-\frac{(y\theta^{-\eta'})^2}{K}}dy\\
& \leq\frac{K_1}{\sqrt{2\pi}m(u)}e^{-\frac{m^2(u)}{2\sigma_u^2(\tau_n)}}\theta^{\eta'}\int_{\theta^{\eta-\eta'}}^\infty e^{2y\theta^{\eta'}}
y^{2/\eta'-1}e^{-\frac{y^2}{K}}dy\\
& \leq\frac{K_1}{\sqrt{2\pi}m(u)}e^{-\frac{m^2(u)}{2\sigma_u^2(\tau_n)}}
e^{-\frac{\theta^{2(\eta-\eta')}}{K_2}}.
\end{align*}
 Using the above inequality and (\ref{variance}), we have  that
\begin{align*}
\mathbb{P}&\left(\max_{\substack{0\le l\le L\\0\le |n|\le N}}Z_u(s_l,\tau_n)\le
m(u) - \frac{\theta^\eta}{m(u)},
\sup_{(s,\tau)\in[0,T]\times J(u)} Z_u(s,\tau)>m(u)\right)\\
& \leq \sum_{0\leq l\leq L,|n|\leq N}\mathbb{P}\left(Z_u(s_l,\tau_n)\leq m(u)-\frac{\theta^\eta}{m(u)}, \sup_{(s,\tau)\in E_{l,n}(u)}Z_u(s,\tau)>m(u)\right)\\
& \leq \sum_{0\leq l\leq L,|n|\leq N}\frac{K_1}{\sqrt{2\pi}m(u)}e^{-\frac{m^2(u)}{2\sigma_u^2(\tau_n)}}
e^{-\frac{\theta^{2(\eta-\eta')}}{K_2}}\\
& \leq L\frac{K_1}{\sqrt{2\pi}m(u)}e^{-\frac{\theta^{2(\eta-\eta')}}{K_2}}\sum_{|n|\leq N}e^{-\frac{m^2(u)(1+B(nq)^2/(4A))}{2}}\\
&\leq K_1\left(\frac{u}{m(u)\Delta(u)}\right)^2e^{-\frac{m^2(u)}{2}}\theta^{-2}e^{-\frac{\theta^{2(\eta-\eta')}}{K_2}}\\
&\leq K_1\frac{u^{\gamma}}{m(u)}\Psi(m(u))e^{-\frac{\theta^{2(\eta-\eta')}}{K_2}}.
\end{align*}
This completes the proof.
\end{proof}

\rd{Finally, by following the same arguments as in \cite[Theorems 3.3]{Debicki16} with the supremum functional substituted
by its discrete counterpart, the maximum, we state the following result. Note that the asymptotic result below is a discrete version of \eqref{eq:tauSmaller} in \autoref{lem:asymptotics}.}
\begin{lem}
\label{lem:disc_asymp}
For any $T, \theta>0$, as $u\toi$,
\begin{align*}
\prob{ \max_{\substack{0\le l\le L\\0\le |n|\le N}} Z_u(s_l,\tau_n) > m(u)}
= (\mathcal{H}_{\eta_{\alpha_\infty}}^{\theta})^2\sqrt{\frac{2A\pi}{B}}\zeta_{\alpha_\infty} T\frac{u^{\gamma}}{m(u)}\Psi(m(u))(1+o(1)),
\end{align*}
where $\mathcal H_{\eta_{\alpha_\infty}}^\theta=\lim_{S\to\infty}
S^{-1}\ee \exp\left(\sup_{t\in\theta \mathbb Z \cap [0,S]}\left(\sqrt 2 \eta_{\alpha_\infty}(t) - \Var(\eta_{\alpha_\infty}(t))\right)\right)$.
\end{lem}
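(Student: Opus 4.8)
The plan is to rerun the proof of \citep[Theorem 3.3]{Debicki16} line by line, replacing every continuous supremum (over an $s$-interval, or over a box in $(s,\tau)$) by the maximum over the corresponding lattice points, and to check that this substitution affects only the limiting Pickands constant, turning $\mathcal H_{\eta_{\alpha_\infty}}$ into its lattice version $\mathcal H^\theta_{\eta_{\alpha_\infty}}$. Two observations make the substitution harmless on the cheap side. First, since $\max\le\sup$, every upper estimate invoked in \citep{Debicki16} --- notably the Piterbarg inequality \citep[Theorem 8.1]{Piterbarg96} and \eqref{eq:tauBigger} --- stays valid verbatim with the maximum in place of the supremum. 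Second, the lattice points indexed by $0\le l\le L$, $0\le|n|\le N$ are exactly those lying in $[0,T]\times J(u)$, so the reduction to the strip $J(u)$ that is needed in the continuous argument is here automatic, and one never has to discretize outside $J(u)$.

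The core is the local (single-block) estimate. Partition $[0,T]$ into consecutive $s$-blocks of length $Sq(u)/\theta=S\Delta(u)/u$ and the strip $J(u)$ into $\tau$-blocks of the same length, for a large fixed $S>0$, and split the event $\{\max_{l,n}Z_u(s_l,\tau_n)>m(u)\}$ into the union over block-pairs of the corresponding local events. For one block-pair I would condition on $Z_u$ at a reference lattice point of the block-pair and use the covariance expansions \eqref{variance} and \eqref{cor0}: after the rescaling $s\mapsto(\Delta(u)/u)\tilde s$, $\tau\mapsto(\Delta(u)/u)\tilde\tau$, which maps both the grid of the $s_l$ and the grid of the $s_l+\tau_n$ onto a translate of $\theta\mathbb Z$, the conditional field converges in the finite-dimensional sense --- which is all one needs on a lattice --- to a field whose dependence on $\tilde s$ and on $\widetilde{s+\tau}$ splits, by \eqref{cor0}, into two independent copies of $\eta_{\alpha_\infty}$. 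Consequently the Pickands functional attached to the block-pair factors as $\bigl(\mathcal H^\theta_{\eta_{\alpha_\infty}}([0,S])\bigr)^2$; the parabolic decay of $\sigma_u$ from \eqref{variance} attaches a Gaussian weight in $\tau_n-\tau(u)$; and the pointwise density $\Psi(m(u))$ together with all the normalizing constants --- hence the factors $\zeta_{\alpha_\infty}$ and $u^\gamma/m(u)$ --- are untouched by the discretization, exactly as in the continuous case.

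It then remains to assemble the pieces. Summing the single block-pair asymptotics over the $s$-blocks produces the factor $T$ (up to the normalization $u/\Delta(u)$, which is absorbed into $u^\gamma$), while summing the Gaussian weights over the $\tau$-blocks is a Riemann sum that, because $m(u)q(u)\to0$ (as established in the proof of \autoref{lem:discreate_approx}), converges to the corresponding Gaussian integral and yields the factor $\sqrt{2A\pi/B}$, just as in \citep{Debicki16}. For the matching lower bound I would use Bonferroni and discard the sum, over pairs of distinct block-pairs, of the joint probabilities; these cross terms are estimated exactly as in the continuous proof --- via \eqref{eq:r2} when the two $s$-blocks are close and via \eqref{eq:rboundOrder} when they are far --- and are $o$ of the main term. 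Letting finally $S\to\infty$ replaces $\mathcal H^\theta_{\eta_{\alpha_\infty}}([0,S])/S$ by $\mathcal H^\theta_{\eta_{\alpha_\infty}}$ and gives the claimed identity.

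The one genuinely new point, and the part I expect to demand the most care, is the bookkeeping around the lattice constant itself: one must know that $\mathcal H^\theta_{\eta_{\alpha_\infty}}=\lim_{S\to\infty}S^{-1}\ee\exp\bigl(\sup_{t\in\theta\mathbb Z\cap[0,S]}(\sqrt2\,\eta_{\alpha_\infty}(t)-\Var(\eta_{\alpha_\infty}(t)))\bigr)$ exists in $(0,\infty)$, and that the factorization of the block-pair functional into a product of two such constants is legitimate and uniform in $u$. Existence, finiteness and positivity follow from the subadditivity and ergodic arguments of \citep{Debicki17b,Debicki17c} applied to the lattice $\theta\mathbb Z$ in place of $\mathbb R$, the nondegeneracy of the increments of $\eta_{\alpha_\infty}$ over $\theta\mathbb Z$ playing the role it plays there; granting this, there is no real obstruction, and the proof is, as the statement says, that of \citep[Theorem 3.3]{Debicki16} carried out with the maximum substituted for the supremum throughout.
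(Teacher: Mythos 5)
Your proposal is correct and follows essentially the same route as the paper, which itself gives no independent argument but simply asserts that the proof of \citep[Theorem 3.3]{Debicki16} goes through verbatim with the supremum replaced by the lattice maximum, the only change being that the double Pickands constant becomes its discrete counterpart $(\mathcal H^{\theta}_{\eta_{\alpha_\infty}})^2$. Your additional observations (the grid lies inside $[0,T]\times J(u)$ so no strip reduction is needed, the rescaled $s$- and $(s+\tau)$-grids both become translates of $\theta\mathbb Z$ so the block functional factors into two lattice constants, and finite-dimensional convergence suffices on a lattice) are exactly the points that make this substitution work.
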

\rd{By the monotone convergence theorem, it follows that $\mathcal H_{\eta_{\alpha_\infty}}^\theta\to \mathcal H_{\eta_{\alpha\infty}}$ as $\theta \to 0$, since $H_{\eta_{\alpha\infty}}$ is a positive, finite constant and $\eta_{\alpha_\infty}$ has almost surely continuous sample paths.
Consequently, when the discretization parameter $\theta$ decreases to zero so that the number of discretization points grows to infinity, we recover \eqref{eq:tauSmaller}.}

\section{Auxiliary Lemmas}
\label{sec:auxLem}
We begin with some auxiliary lemmas that are later needed in the proofs.
The first lemma is \citep[Theorem 4.2.1]{Leadbetter83}.
\begin{lem}[Berman's inequality]
Suppose $\xi_1,\ldots,\xi_n$ are standard normal variables with covariance matrix $\Lambda^1=(\Lambda^1_{i,j})$ and $\eta_1,\ldots,\eta_n$ similarly with covariance matrix $\Lambda^0=(\Lambda_{i,j}^0)$. Let $\rho_{i,j}=\max(|\Lambda_{i,j}^1|,|\Lambda_{i,j}^0|)$ and let $u_1,\ldots,u_n$ be real numbers. Then,
\begin{align*}
\mathbb P\left(
\bigcap_{j=1}^n
\{\xi_j\le u_j\}\right) &- \prob{\bigcap_{j=1}^n\{\eta_j\le u_j\}}
\\
&\le
\frac{1}{2\pi}\sum_{1\le i<j\le n}\left(\Lambda_{i,j}^1-\Lambda_{i,j}^0\right)^+
(1-\rho_{i,j}^2)^{-\half}\exp\left(-\frac{u_i^2+u_j^2}{2(1+\rho_{i,j})}\right).
\end{align*}
\end{lem}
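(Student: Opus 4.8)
The statement is the classical normal comparison (Slepian--Berman) lemma, and the plan is to prove it by Berman's interpolation argument. First I would dispose of trivialities: if $\rho_{i,j}=1$ for some pair the right-hand side is $+\infty$ and there is nothing to show, so assume $\rho_{i,j}<1$ for all $i<j$; and by replacing $\Lambda^b$ with $(\Lambda^b+\varepsilon I_n)/(1+\varepsilon)$ for $b=0,1$ and letting $\varepsilon\downarrow 0$ at the end — this perturbation only shrinks each $\rho_{i,j}$, and the multivariate normal distribution function is continuous in the covariance matrix — I may assume $\Lambda^0$ and $\Lambda^1$ are positive definite. Then I would interpolate: put $\Lambda^h=(1-h)\Lambda^0+h\Lambda^1$ for $h\in[0,1]$, which is a positive definite correlation matrix for each $h$ (convex combinations of positive definite matrices are positive definite, and the diagonal stays equal to $1$), let $\varphi_h$ be the density of the centered Gaussian vector $\zeta^h=(\zeta^h_1,\dots,\zeta^h_n)$ with covariance $\Lambda^h$, and set $\Phi(h)=\mathbb P\bigl(\bigcap_{j=1}^n\{\zeta^h_j\le u_j\}\bigr)$, so that the left-hand side of the claim equals $\Phi(1)-\Phi(0)=\int_0^1\Phi'(h)\,dh$.

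The core of the argument is the computation of $\Phi'(h)$. I would use Plackett's identity $\partial\varphi_h/\partial\Lambda^h_{i,j}=\partial^2\varphi_h/\partial x_i\,\partial x_j$ for $i\neq j$, obtained by differentiating the Fourier representation $\varphi_h(x)=(2\pi)^{-n}\int_{\mathbb R^n}\exp\bigl(-\mathrm i\,t^\top x-\tfrac12 t^\top\Lambda^h t\bigr)\,dt$, positive definiteness making all the relevant integrands dominated by integrable functions uniformly for $h$ in compact subintervals, which also legitimizes differentiation under the integral sign. Since $h\mapsto\Lambda^h$ is affine with $\dot\Lambda^h_{i,j}=\Lambda^1_{i,j}-\Lambda^0_{i,j}$, the chain rule gives
\[
\Phi'(h)=\sum_{1\le i<j\le n}\bigl(\Lambda^1_{i,j}-\Lambda^0_{i,j}\bigr)\int_{-\infty}^{u_1}\!\!\cdots\int_{-\infty}^{u_n}\frac{\partial^2\varphi_h}{\partial x_i\,\partial x_j}(x)\,dx .
\]
Integrating out the $x_i$- and $x_j$-coordinates of $\partial^2_{x_i x_j}\varphi_h$ turns the integral in each summand into $p^h_{i,j}(u_i,u_j)\,\mathbb P\bigl(\zeta^h_k\le u_k,\ k\neq i,j\mid \zeta^h_i=u_i,\zeta^h_j=u_j\bigr)$, where $p^h_{i,j}$ denotes the bivariate marginal density of $(\zeta^h_i,\zeta^h_j)$; this quantity is nonnegative and at most $p^h_{i,j}(u_i,u_j)$. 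Bounding each summand by its positive part therefore yields $\Phi(1)-\Phi(0)\le\int_0^1\sum_{i<j}(\Lambda^1_{i,j}-\Lambda^0_{i,j})^+\,p^h_{i,j}(u_i,u_j)\,dh$.

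It then remains to bound $p^h_{i,j}(u_i,u_j)$ uniformly in $h$. With $\rho=\Lambda^h_{i,j}$, which lies between $\Lambda^0_{i,j}$ and $\Lambda^1_{i,j}$ and hence satisfies $|\rho|\le\rho_{i,j}<1$, I would combine the elementary inequality
\[
\frac{u_i^2-2\rho u_iu_j+u_j^2}{1-\rho^2}\ \ge\ \frac{u_i^2+u_j^2}{1+|\rho|},
\]
which after clearing denominators reduces to $|\rho|(u_i^2+u_j^2)\ge 2\rho u_iu_j$ (true since $2|\rho u_iu_j|\le|\rho|(u_i^2+u_j^2)$ by the arithmetic--geometric mean inequality), with $\sqrt{1-\rho^2}\ge\sqrt{1-\rho_{i,j}^2}$ to obtain $p^h_{i,j}(u_i,u_j)\le\bigl(2\pi\sqrt{1-\rho_{i,j}^2}\bigr)^{-1}\exp\bigl(-(u_i^2+u_j^2)/(2(1+\rho_{i,j}))\bigr)$, a bound free of $h$. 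Substituting this into the previous display and carrying out the now trivial integration in $h$ reproduces exactly the asserted inequality, and letting $\varepsilon\downarrow 0$ removes the positive definiteness assumption. I expect the only genuine subtleties to be bookkeeping ones — rigorously justifying the differentiation under the integral sign (equivalently, Plackett's identity) and the degenerate-case limiting argument; everything else is elementary. Since the statement is verbatim \cite[Theorem~4.2.1]{Leadbetter83}, one may alternatively just cite it, but the interpolation proof above is short enough to include.
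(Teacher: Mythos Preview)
Your proof is correct and is the standard Slepian--Berman interpolation argument. The paper, however, does not prove this lemma at all: it simply records it as \cite[Theorem~4.2.1]{Leadbetter83} and moves on, so there is nothing to compare your argument against beyond the citation you yourself mention at the end.
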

The following lemma is a general form of the Borel-Cantelli lemma; cf. \citep{Spitzer64}.
\begin{lem}[Borel-Cantelli lemma]
Consider a sequence of event $\{E_k\}_{k=0}^\infty$. If
\[
\sum_{k=0}^\infty \prob{E_k} < \infty,
\]
then $\prob{E_n\text{ i.o.}} = 0$. Whereas, if
\[
\sum_{k=0}^\infty \prob{E_k} = \infty\quad\text{and}\quad
\liminf_{n\toi}\frac{\sum_{1\le k\ne t\le n}\prob{E_k E_t}}{\left(\sum_{k=1}^n\prob{E_k}\right)^2}\le 1,
\]
then $\prob{E_n\text{ i.o.}} = 1$.
\end{lem}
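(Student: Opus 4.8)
The plan is to treat the two halves independently; both are classical, so the work lies in getting the quantifiers and the order of limits right rather than in any new idea. For the convergence half I would write $\{E_n\text{ i.o.}\}=\bigcap_{N\ge 1}\bigcup_{n\ge N}E_n$ and use countable subadditivity to obtain $\prob{E_n\text{ i.o.}}\le\sum_{n\ge N}\prob{E_n}$ for every $N$; letting $N\toi$ and invoking $\sum_n\prob{E_n}<\infty$ yields $\prob{E_n\text{ i.o.}}=0$. There is nothing to overcome here.

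For the divergence half the plan is the second moment method applied to truncated counting variables. Fix $N\ge 1$ and set $S_{N,n}=\sum_{k=N}^n\mathbf 1_{E_k}$, so that $\E{S_{N,n}}=\sum_{k=N}^n\prob{E_k}$ and $\E{S_{N,n}^2}=\sum_{k=N}^n\prob{E_k}+\sum_{N\le k\ne t\le n}\prob{E_k E_t}$. The key inequality is the Paley--Zygmund (Cauchy--Schwarz) bound $\prob{S_{N,n}>0}\ge(\E{S_{N,n}})^2/\E{S_{N,n}^2}$, which lower-bounds $\prob{\bigcup_{k=N}^n E_k}$. Writing $a_n=\sum_{k=1}^n\prob{E_k}$ and $c_n=\sum_{1\le k\ne t\le n}\prob{E_k E_t}$, I would note $\E{S_{N,n}}=a_n-a_{N-1}$ and $\E{S_{N,n}^2}\le a_n+c_n$, and then pass to a subsequence $n_j\toi$ along which $c_{n_j}/a_{n_j}^2\to\beta:=\liminf_n c_n/a_n^2\le 1$. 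Since $a_n\toi$ by hypothesis, dividing numerator and denominator by $a_{n_j}^2$ makes both $a_{N-1}/a_{n_j}$ and $a_{n_j}/a_{n_j}^2$ vanish, so the bound collapses to $\prob{\bigcup_{k\ge N}E_k}\ge 1/\beta\ge 1$; being a probability it equals $1$. As this holds for every $N$ and $\prob{\bigcup_{k\ge N}E_k}\downarrow\prob{E_n\text{ i.o.}}$ as $N\toi$, we conclude $\prob{E_n\text{ i.o.}}=1$.

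The main point needing care --- and the closest thing to an obstacle --- is the order of limits: one must truncate at a fixed $N$ and only then send $n\toi$, so that it is the tail union $\bigcup_{k\ge N}E_k$ (hence eventually $\{E_n\text{ i.o.}\}$) that is being estimated rather than the full union; one must use $\sum_n\prob{E_n}=\infty$ twice, to discard $a_{N-1}$ and to discard the diagonal contribution $a_n$ relative to $a_n^2$; and one must handle the degenerate case $\beta=0$ by reading $1/\beta$ as $+\infty$ (equivalently, the lower bound already tends to $+\infty$ there, forcing the probability to be $1$). None of this is deep, but it is exactly what separates a correct argument from a merely plausible one.
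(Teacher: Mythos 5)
Your argument is correct. The paper gives no proof of this lemma (it only cites Spitzer), and what you propose is exactly the standard second-moment argument behind that citation: truncate at $N$, apply the Paley--Zygmund/Cauchy--Schwarz bound $\mathbb{P}(S_{N,n}>0)\ge(\E{S_{N,n}})^2/\E{S_{N,n}^2}$, pass to a subsequence realizing the liminf, and finish by continuity from above; all the quantifier and limit-order issues you flag are handled properly. One small remark: the ``degenerate case $\beta=0$'' (indeed any $\beta<1$) cannot actually occur, since by Cauchy--Schwarz $a_n+c_n=\E{S_{1,n}^2}\ge a_n^2$, so $c_n/a_n^2\ge 1-1/a_n\to 1$ and the hypotheses force $\beta=1$; your treatment of that case is therefore vacuous, though harmless.
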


\begin{lem}
\label{lem:bound}
For any $\varepsilon\in(0,1)$, there exist positive constants $K$ and $\rho$ depending only on $\varepsilon, \alpha_0,\alpha_\infty$ and $p$ such that
\[
\prob{\sup_{S< t\le T} \frac{Q_X(t)}{f_p(t)}\le 1}
\le
\exp
\left(
-\rd{(1-\varepsilon)}
\int_{S+f_p(S)}^{T}
\frac{1}{f_p(u)}
\prob{\sup_{t\in [0,f_p(u)]} Q_X(t) >  f_p(u)}\D u
\right) + K S^{-\rho},
\]
for any $T-f_p(S)\ge S\ge K$, with $f_p(T)/f_p(S)\le \mathcal C$ and $\mathcal C$ being some universal positive constant.
\end{lem}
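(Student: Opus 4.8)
The plan is to discretize the time interval $[S,T]$, reduce the event $\{\sup_{S<t\le T}Q_X(t)/f_p(t)\le 1\}$ to a finite intersection of level-crossing events for a stationary Gaussian field, and then apply Berman's inequality to compare with an independent sequence, for which the product bound gives the stated exponential. I would first partition $[S,T]$ into consecutive blocks $I_k=[t_k,t_{k+1}]$ whose lengths are comparable to $f_p(t_k)$ (say $t_{k+1}-t_k\asymp f_p(t_k)$), chosen so that on each block $f_p$ is essentially constant up to a factor arbitrarily close to $1$; the hypothesis $f_p(T)/f_p(S)\le\mathcal C$ together with the asymptotics of $f_p$ (derived from \autoref{lem:asymptotics} and the regular variation of $m$) guarantees that the number of blocks is of the right order and that the block lengths vary slowly. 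On block $I_k$, stationarity of $Q_X$ and the transformation \eqref{eq:basic_transformation} let me write
\[
\prob{\sup_{t\in I_k} Q_X(t)\le f_p(t)} \le \prob{\sup_{t\in[0,|I_k|]}Q_X(t)\le (1+\delta)f_p(t_k)}=1-\psi_k,
\]
where $\psi_k$ is, up to a factor $1+o(1)$, equal to $\tfrac{|I_k|}{f_p(t_k)}\,\psi(f_p(t_k))$ by the exact asymptotics \eqref{eq:tauSmaller} (after the standard reduction to the strip $J(u)$ via \eqref{eq:tauBigger} and the discretization of \autoref{lem:discreate_approx}–\autoref{lem:disc_asymp}, with $\theta\downarrow 0$).

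Next I would introduce, for each block, a finite grid $\{t_{k,j}\}$ inside $I_k$ at spacing $q(f_p(t_k))$ as in \autoref{sec:disc}, so that the whole event is dominated by $\prob{\bigcap_{k,j}\{Q_X(t_{k,j})\le f_p(t_{k,j})\}}$ — passing to the maximum over the grid rather than the supremum costs only the error term in \autoref{lem:discreate_approx}, which after summing over blocks contributes the $KS^{-\rho}$ term (here the polynomial decay in $S$ comes from the factor $e^{-\theta^{2(\eta-\eta')}/K_2}$ balanced against the number of grid points, exactly as in the proof of that lemma). I then standardize each $Q_X(t_{k,j})$ and apply Berman's inequality to replace the joint law by a product of marginals: the correction term is a double sum $\sum_{(k,j)\ne(k',j')}(\Lambda^1_{\cdot}-\Lambda^0_{\cdot})^+(1-\rho^2)^{-1/2}\exp(-(u^2_\cdot+u^2_\cdot)/(1+\rho_\cdot))$ with the $\Lambda^0$ chosen block-diagonal (or zero across blocks). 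For pairs in distinct blocks the correlation of the underlying field is governed by \eqref{eq:rboundOrder}, i.e.\ it decays like a power of the time separation, which makes the off-diagonal Berman sum negligible — again absorbed into $KS^{-\rho}$ after choosing $\rho$ small enough; for pairs within a block one uses \eqref{eq:r1}–\eqref{eq:r2} and the local behaviour \eqref{variance}, \eqref{cor0} exactly as in the single-block tail computation, so that the within-block contribution is already accounted for in $\psi_k$.

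Finally, on the independent sequence the probability factorizes: $\prod_k(1-\psi_k)\le\exp(-\sum_k\psi_k)$, and $\sum_k\psi_k=(1+o(1))\sum_k\tfrac{|I_k|}{f_p(t_k)}\psi(f_p(t_k))$ is a Riemann sum for $\int_{S}^{T}\tfrac{1}{f_p(u)}\psi(f_p(u))\,\D u$; since $f_p$ varies slowly over each block and $\psi(f_p(\cdot))/f_p(\cdot)$ is regularly varying, the Riemann sum is within a factor $1-\varepsilon$ of the integral for $S$ large, and shifting the lower limit from $S$ to $S+f_p(S)$ only discards the first block. Combining the three contributions gives the claimed bound. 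The main obstacle is the bookkeeping for the off-block Berman term: one must check that the power-law correlation decay from \eqref{eq:rboundOrder}, when summed against the Gaussian weights $\exp(-f_p^2/ (1+\rho))$ over all $O(\log T)$ blocks and all grid points, is genuinely of order $S^{-\rho}$ uniformly in $T$ under the constraint $f_p(T)/f_p(S)\le\mathcal C$; this is where the restriction $T-f_p(S)\ge S$ and the slow variation of $f_p$ are essential, and it requires care to keep all constants depending only on $\varepsilon,\alpha_0,\alpha_\infty,p$.
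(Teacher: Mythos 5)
There are two genuine gaps. First, you propose to ``standardize each $Q_X(t_{k,j})$ and apply Berman's inequality'' to the vector of values of $Q_X$ on a time grid. But $Q_X(t)=\sup_{s\le t}(X(t)-X(s)-c(t-s))$ is \emph{not} Gaussian --- each $Q_X(t_{k,j})$ is itself the supremum of a Gaussian family --- so Berman's inequality cannot be applied to these random variables. The paper avoids this by first passing, via \eqref{eq:basic_transformation}, to the genuinely Gaussian field $Z_{x_k}(s,\tau)$ on the strip $\tilde I_k\times J(x_k)$ and discretizing in \emph{both} variables $(s,\tau)$; Berman is applied to that Gaussian vector, and the comparison between the grid maximum and the continuous supremum is supplied by \autoref{lem:disc_asymp} (with $\theta=\Delta(S)/S\to0$). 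Note also that in this upper-bound lemma the passage from the supremum to the grid maximum inside the event $\{\cdot\le m(x_k)\}$ is free (the probability only increases); what is needed is a matching \emph{lower} bound on the tail of the discrete maximum, which is \autoref{lem:disc_asymp}, not \autoref{lem:discreate_approx} (the latter is used in the companion lower-bound lemma and in Step 2 of the main theorem).

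Second, and more fundamentally, your blocks are \emph{consecutive} ($t_{k+1}-t_k\asymp f_p(t_k)$ with no separation), and your treatment of the Berman correction only distinguishes ``within a block'' from ``distinct blocks with power-law decay via \eqref{eq:rboundOrder}.'' The bound \eqref{eq:rboundOrder} requires the time separation to dominate the $\tau$-range, so it says nothing about pairs of grid points lying near a common boundary of two adjacent blocks; for such pairs the correlation tends to $1$, the Berman weight is of order $\exp(-m^2(x_k)/2)\asymp \mathrm{polylog}(t_k)/t_k$ per pair, and summing over the $\asymp (T-S)/f_p(S)$ blocks (with $T$ as large as a power of $S$ under $f_p(T)/f_p(S)\le\mathcal C$) yields a contribution of order $\log(T/S)$ up to polylogarithmic factors --- not $O(S^{-\rho})$. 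This is exactly why the paper's construction inserts gaps of length $\varepsilon x_{k-1}$ between $I_{k-1}$ and $I_k$ (i.e.\ $s_k=t_{k-1}+\varepsilon x_{k-1}$): for neighbouring blocks the rescaled separation of any two grid points then exceeds $\varepsilon/2$, so \eqref{eq:r2} gives a uniform correlation bound $\zeta<1$ and the cross-block sum is summable to $KS^{-\rho}$; the price of discarding the gaps is precisely (part of) the factor $(1-\varepsilon)$ in front of the integral, which in your scheme is attributed solely to a Riemann-sum approximation. Without this separation mechanism (or some substitute argument for boundary pairs), the claimed $KS^{-\rho}$ error term is not justified.
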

\begin{proof}
Let $\varepsilon\in(0,1)$ be some positive constant. For the remainder of the proof let $K$ and $\rho$ be two positive constants depending only on $\varepsilon, \alpha_0,\alpha_\infty$ and $p$ that may differ from line to line. For any $k\ge 0$ put $s_0 = S$, $y_0 = f_p(s_0)$, $t_0 = s_0 + y_0$, $x_0 = f_p(t_0)$ and
\begin{align*}
s_k & = t_{k-1} + \varepsilon x_{k-1},
\quad y_k = f_p(s_k),
\quad t_k = s_k + y_k,
\quad x_k = f_p(t_k),\\
\quad I_k &=(s_k,t_k],
\quad \tilde I_k = \frac{I_k}{x_k} = (\tilde s_k, \tilde t_k],
\quad |\tilde I_k| = \frac{y_k}{x_k}.
\end{align*}
From this construction, it is easy to see that the intervals $I_k$ are disjoint. Furthermore, $\delta(I_k,I_{k+1})=\varepsilon x_k$, and $1-\varepsilon\le y_k/x_k\le 1$, for any $k\ge0$ and sufficiently large $S$.
Note that, for any $k\ge 0$, $|I_k|\ge f_p(S)$, therefore if $T(S,\varepsilon)$ is the smallest number of intervals  $\{I_k\}$ needed to cover $[S,T]$, then $T(S,\varepsilon)\le[(T-S)/(f_p(S)(1+\varepsilon))]$. Moreover, since $f_p(T)/f_p(S)$ is bounded by the constant $\mathcal C>0$ not depending on $S$ and $\varepsilon$, it follows that, $x_k/x_t\le \mathcal C$ for any $0\le t<k\le T(S,\varepsilon)$.

Now let us introduce a discretization of the set $\tilde I_k\times J(x_k)$ as in \autoref{sec:disc}.
That is, for some $\theta=\frac{\Delta(S)}{S}$, define grid points
\begin{align*}
\label{eq:disc100}
s_{k,l} &= \tilde s_k + lq_k,\quad 0\le l\le L_k,\quad  L_k = [(1-\varepsilon)/ q_k],
\quad q_k  = \theta \frac{\Delta(x_k)}{x_k},\\
\tau_{k,n} &= \tau(x_k)+ nq_k, \quad 0\le |n| \le N_k,\quad N_k = [\tau^*(x_k)/q_k].
\end{align*}

Since $f_p$ is an increasing function, it easily follows that,
\begin{align*}
\mathbb P  &\Bigg(\sup_{S< t\le T}\frac{Q_{X}(t)}{f_p(t)}\le 1\Bigg)
\le
\prob{\bigcap_{k=0}^{T(S,\varepsilon)}\left\{\sup_{t \in I_k} Q_{X}(t) \le x_k\right\}}
\le
\prob{\bigcap_{k=0}^{T(S,\varepsilon)}\left\{\sup_{\substack{s \in I_k/x_k \\ \tau\in J(x_k)}} Z_{x_k}(s,\tau) \le m(x_k)\right\}}\\
&\le
\prob{\bigcap_{k=0}^{T(S,\varepsilon)}\left\{\max_{\substack{ 0\le l\le L_k\\\quad 0\le |n| \le N_k}} Z_{x_k}(s_{k,l},\tau_{k,n})\le m(x_k)\right\}}\\
&\le
\prod_{k=0}^{T(S,\varepsilon)}\prob{\max_{\substack{ 0\le l\le L_k\\\quad 0\le |n| \le N_k}} Z_{x_k}(s_{k,l},\tau_{k,n})\le m(x_k)}
+\sum_{0\le t<k\le T(S,\varepsilon)}C_{k,t} =: P_1 + P_2,
\end{align*}
where the last inequality follows from Berman's inequality with
\[
C_{k,t}=
 \sum_{\substack{0\le l\le L_k\\0\le p\le L_t}}\sum_{\substack{|n|\le N_k\\|m|\le N_t}}
\frac{|r_{x_k,x_t}(s_{k,l},\tau_{k,n},s_{t,p},\tau_{t,m})|}{\sqrt{1- r^2_{x_k,x_t}(s_{k,l},\tau_{k,n},s_{t,p},\tau_{t,m})}}
\exp\left(-\frac{\half(m^2(x_k)+m^2(x_t))}{1+|r_{x_k,x_t}(s_{k,l},\tau_{k,n},s_{t,p},\tau_{t,m})|}\right).
\]

\noindent\textit{Estimation of $P_1$:}

\vb

Since for any $u$ the process $Z_u$ is stationary in the first variable, from \autoref{lem:disc_asymp}
we have that, as $S\to\infty$ (noting that $\theta=\frac{\Delta(S)}{S}\to 0$)
\begin{align*}
 \prob{\max_{\substack{ 0\le l\le L_k\\\quad 0\le |n| \le N_k}} Z_{x_k}(s_{k,l},\tau_{k,n}) > m(x_k)}\sim \prob{\sup_{(s,\tau)\in \tilde I_k\times J(x_k)} Z_{x_k}(s,\tau) > m(x_k)}
\end{align*}
uniformly with respect to $0\leq k\leq T(S,\varepsilon)$.
Hence  for any $\varepsilon\in(0,1)$, sufficiently large $S$ and small $\theta$,
\begin{align*}
P_1 &\le
\exp\left(
-\sum_{k=0}^{T(S,\varepsilon)} \prob{\max_{\substack{ 0\le l\le L_k\\\quad 0\le |n| \le N_k}} Z_{x_k}(s_{k,l},\tau_{k,n}) > m(x_k)}
\right)\\
&\le
\exp\left(
-\rd{(1-\frac{\varepsilon}{8})}\sum_{k=0}^{T(S,\varepsilon)} \prob{\sup_{(s,\tau)\in \tilde I_k\times J(x_k)} Z_{x_k}(s,\tau) > m(x_k)}
\right)
\end{align*}
Then, by \eqref{eq:basic_transformation} combined with \autoref{lem:asymptotics},
\begin{align*}
P_1
&\le
\exp\left(
-\rd{(1-\frac{\varepsilon}{4})}\sum_{k=0}^{T(S,\varepsilon)} \prob{\sup_{\substack{s\in \tilde I_k\\\tau\ge 0}} Z_{x_k} (s,\tau) > m(x_k)}
\right)\\
&=
\exp\left(
-\rd{(1-\frac{\varepsilon}{4})}\sum_{k=0}^{T(S,\varepsilon)} \prob{\sup_{t\in [0,\frac{y_k}{x_k}f_p(t_k)]} Q_{X}(t) >  f_p(t_k)}
\right)\\
&\le
\exp\left(
-\rd{(1-\frac{\varepsilon}{2})}\sum_{k=0}^{T(S,\varepsilon)} \prob{\sup_{t\in [0,f_p(t_k)]} Q_{X}(t) >  f_p(t_k)}
\frac{f_p(s_k)}{f_p(t_k)}
\right)\\
&\le
\exp\left(
-\rd{(1-\varepsilon)}\int_{S+f_p(S)}^{T}
\frac{1}{f_p(u)}\prob{\sup_{t\in [0,f_p(u)]} Q_{X}(t) >  f_p(u)}\D u
\right).
\end{align*}

\noindent\textit{Estimation of $P_2$:}

\vb

For any $0\le t<k\le T(S,\varepsilon)$, $0\le l\le L_k$, $0\le p\le L_t$, we have
\begin{align*}
x_ks_{k,l}- x_ts_{t,p}
&=
(s_k+ x_k l q_k) - (s_t + x_t p  q_t)\\
&=
\sum_{i=t}^{k-1}(y_i+\varepsilon x_i) + x_k l q_k - x_t p  q_t
\ge
\sum_{i=t}^{k-1}(y_i+\varepsilon x_i) - y_t\\
&\ge \sum_{i=t+1}^{k-1}y_i.
\end{align*}
Recall that $\lambda = 1-\alpha_\infty$. Hence we can find $s_0>2$ such that for $S$ sufficiently large, $k-t\geq 2s_0$, $0\le l\le L_k$, $0\le p\le L_t$, $|n|\leq N_k$ and $|m|\leq N_t$
\begin{align*}
\frac{x_k\tau_{k,n}+x_t\tau_{t,m}}{|x_ks_{k,l}- x_ts_{t,p}|}\leq \frac{x_k(\tau_{k,n}+\tau_{t,m})}{\sum_{i=t+1}^{k-1}y_i}<1/3,
\end{align*}
which applied to \eqref{eq:rboundOrder} indicates that for $k-t\geq s_0$ and $S$ sufficiently large,
\begin{align}\label{upperboundcor}
r^*_{k,t}:&=\sup_{
	\substack{		
		0\le l\le L_k,
		0\le p\le L_t\\
		|n|\le N_k,
		|m|\le N_t
	}
}
|r_{x_k,x_t}(s_{k,l},\tau_{k,n},s_{t,p},\tau_{t,m})|\nonumber\\
&\le 3^{2(1-\alpha_\infty)}
\sup_{
	\substack{
		0\le l\le L_k,
		0\le p\le L_t\\
		|n|\le N_k,
		|m|\le N_t
	}
}
\left|\frac{\sqrt{x_k\tau_{k,n}x_t\tau_{t,m}}}{\sum_{i=t+1}^{k-1}y_i}\right|^{2\lambda}\nonumber\\
&\le 3^{2(1-\alpha_\infty)}
\sup_{
	\substack{
		0\le l\le L_k,
		0\le p\le L_t\\
		|n|\le N_k,
		|m|\le N_t
	}
}
\left|\frac{x_k\tau_{k,n}}{\sum_{i=t+1}^{k-1}y_i}\right|^{\lambda} \left|\frac{x_t\tau_{t,m}}{\sum_{i=t+1}^{k-1}y_i}\right|^{\lambda}\nonumber\\
&\le K \left|\frac{x_t}{\sum_{i=t+1}^{k-1}y_i}\right|^{\lambda}\le K (k-t)^{-\lambda}\le  \frac{\lambda}{4}.
\end{align}
For $1\leq k-t\leq s_0$, it follows that $x_k\sim x_t$,  $\tau_{k,l}\to \tau^*$ and $\tau_{t,p}\to \tau^*$ as $S\to\infty$,  and $s_{k,l}-s_{t,p}\geq \epsilon x_t/x_k>\epsilon/2$ for $S$ sufficiently large.
Therefore, by (\ref{eq:r2}) there exists a positive constant $\zeta\in(0,1)$ depending only on $\varepsilon$ such that for $S$ sufficiently large
\begin{align}\label{upperboundcor1}
\sup_{1\leq k-t\leq s_0}r^*_{k,t}=\sup_{1\leq k-t\leq s_0}\sup_{
	\substack{		
		0\le l\le L_k,
		0\le p\le L_t\\
		|n|\le N_k,
		|m|\le N_t
	}
}|r_{x_k,x_t}(s_{k,l},\tau_{k,n},s_{t,p},\tau_{t,m})|
\le\zeta<1.
\end{align}
Finally, note that; c.f., \eqref{eq:fp&gamma},
\begin{align*}
N_k&\le L_k \le \frac{2(1-\varepsilon)x_k}{\theta\Delta(x_k)}\le K x_k^{2\gamma}
\le
K (\log t_k)^{\frac{\gamma}{(1-\alpha_\infty)}}
,\\
\exp\left(-\frac{m^2(x_k)}{2}\right) &= \frac{(\log t_k)^{p-\frac{\gamma-1}{2(1-\alpha_\infty)}} }{t_k},
\end{align*}
so that
\begin{align*}
P_2 &\le \frac{4}{\sqrt{1-\zeta^2}}
\sum_{0\le t<k\le T(S,\varepsilon)}
L_kL_tN_kN_t r^*_{k,t}
\exp\left(-\frac{m^2(x_k)+m^2(x_t)}{2(1+r^*_{k,t})}\right)\\
&\le
K\left(
\sum_{\substack{0<k-t\le s_0 \\ 0\le t<k\le T(S,\varepsilon)}} +
\sum_{\substack{k-t > s_0 \\ 0\le t<k\le T(S,\varepsilon)}}
\right) (\cdot)\\
&\le
K\Bigg(
\sum_{k=0}^\infty
x_k^{8\gamma}
\exp\left(-\frac{ m^2(x_k)}{1+\zeta}\right)
+
\sum_{\substack{k-t > s_0 \\ 0\le t<k\le T(S,\varepsilon)}}
(x_kx_t)^{4\gamma}
(k-t)^{-\lambda}
\exp\left(-\frac{m^2(x_k)+m^2(x_t)}{2(1+\frac{\lambda}{4})}\right)
\Bigg)\\
&\le
K\left(
\sum_{k=0}^\infty
t_k^{-\frac{2}{1+ \sqrt{\zeta}}}
+
\sum_{\substack{k-t > s_0 \\ 0\le t<k\le T(S,\varepsilon)}}
t_k^{-\frac{1}{1+\frac{\lambda}{2}}}
 t_t^{-\frac{1}{1+\frac{\lambda}{2}}}
(k-t)^{-\lambda}
\right)\\
&\le
K\left(
\sum_{k=[S]}^\infty
k^{-\frac{2}{1+ \sqrt{\zeta}}}
+
\sum_{\substack{[S]\le t<k\le \infty}}
k^{-\frac{1}{1+\frac{\lambda}{2}}}
t^{-\frac{1}{1+\frac{\lambda}{2}}}
(k-t)^{-\lambda}
\right)\\
&\le
K S^{-\rho},
\end{align*}
where the last inequality follows from basic algebra.
\end{proof}
Let $S>0$ be any fixed number, $a_0 = S$, $y_0 = f_p(a_0)$ and $b_0=a_0+y_0$. For $i>0$, define
\begin{equation}
\label{eq:def_ai2}
a_i = b_{i-1},\quad y_i = f_p(a_i), \quad b_i = a_i + y_i,\quad M_i = (a_i, b_i],
\quad \tilde M_i = \frac{M_i}{y_i} = (\tilde a_i, \tilde b_i].
\end{equation}
From this construction, it is easy to see that the intervals $M_i$ are disjoint, $\cup_{j=0}^{i} M_j=(S,b_{i}]$, and $|\tilde M_i| = 1$.
Now let us introduce a discretization of the set $\tilde M_i\times J(y_i)$ as in \autoref{sec:disc}. That is, for  $\theta=\frac{\Delta(S)}{S}$, define grid points
\begin{align}
\label{eq:disc2}
s_{i,l} &= \tilde a_i+ l q_i,\quad 0\le l\le L_i,\quad  L_i = [1/ q_i],
\quad q_i  = \theta \frac{\Delta(y_i)}{y_i},\\
\nonumber
\tau_{i,n} &= \tau(y_i)+ nq_i, \quad 0\le |n| \le N_i,\quad N_i = [\tau^*(y_i)/q_i].
\end{align}
With the above notation, we have the following lemma.
\begin{lem}
\label{lem:lbound}
For any $\varepsilon\in(0,1)$, there exists positive constants $K$ and $\rho$ depending only on $\varepsilon, \alpha_0,\alpha_\infty$ and $p$ such that, with $\theta_i=(m(y_i))^{-4/\hat\alpha}$, where $\hat\alpha = \min(\alpha_0,\alpha_\infty)$,
\begin{align*}
\mathbb P &\left(
\bigcap_{i=0}^{[(T-S)/f_p(S)]}\left\{
\max_{\substack{0\le l\le L_i\\0\le |n| \le N_i}} Z_{y_i}(s_{i,l},\tau_{i,n}) \le m(y_i) - \frac{\theta_i^{\frac{\hat\alpha}{2}}}{m(y_i)}
	\right\}
	\right)
	\\
&\ge
\frac{1}{4}\exp\left(
-(1+\varepsilon)
\int_{S}^{T}
\frac{1}{f_p(u)}\prob{\sup_{t\in [0,f_p(u)]} Q_{X}(t) >  f_p(u)}
	\D u
\right) - K S^{-\rho},
\end{align*}
for any $T-f_p(S)\ge S\ge K$, with $f_p(T)/f_p(S)\le \mathcal C$ and $\mathcal C$ being some universal positive constant.
\end{lem}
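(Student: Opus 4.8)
We describe the plan of proof, which runs parallel to that of \autoref{lem:bound}, with the inequalities reversed and the Gaussian field compared against a \emph{block-wise independent} copy. Write
\[
A_i=\Big\{\max_{0\le l\le L_i,\ 0\le|n|\le N_i}Z_{y_i}(s_{i,l},\tau_{i,n})\le u_i\Big\},\qquad u_i:=m(y_i)-\theta_i^{\hat\alpha/2}/m(y_i)=m(y_i)-(m(y_i))^{-3},
\]
let $V_i=\big(Z_{y_i}(s_{i,l},\tau_{i,n})\big)_{l,n}$ be the Gaussian vector of grid values attached to the $i$-th block, $0\le i\le n:=[(T-S)/f_p(S)]$, and let $(\widetilde V_0,\dots,\widetilde V_n)$ be a Gaussian family whose blocks are mutually independent and with $\widetilde V_i$ having the same internal covariance as $V_i$ (so $\max\widetilde V_i\stackrel{d}{=}\max V_i$). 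Applying Berman's inequality to $(V_0,\dots,V_n)$ versus $(\widetilde V_0,\dots,\widetilde V_n)$, the within-block terms cancel (the two covariances agree there), giving
\[
\mathbb P\Big(\bigcap_{i=0}^{n}A_i\Big)\ \ge\ \prod_{i=0}^{n}\mathbb P(A_i)\ -\ E,
\]
where $E$ is the sum, over pairs of grid points lying in \emph{distinct} blocks and with \emph{negative} correlation $r$, of the terms $\tfrac{1}{2\pi}\,|r|\,(1-r^2)^{-1/2}\exp\!\big(-\tfrac{u_i^2+u_j^2}{2(1+|r|)}\big)$; only negative $r$ contribute, because here the comparison family is the \emph{less} correlated one, i.e.\ $(\Lambda^0_{ij}-\Lambda^1_{ij})^{+}=0$ whenever $\Lambda^1_{ij}\ge 0=\Lambda^0_{ij}$.

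The estimate $E\le KS^{-\rho}$ is obtained exactly as the bound on $P_2$ in the proof of \autoref{lem:bound}. Pairs of blocks with index gap $j-i\ge s_0$ satisfy $|r|\le K(j-i)^{-\lambda}$ by \eqref{eq:rboundOrder}; for $1\le j-i<s_0$ one uses that $y_i\sim y_j$ and that the only cross-block pairs whose correlation can be negative are those at normalized separation bounded away from $0$, where $|r|$ is bounded away from $1$ (cf.\ \eqref{eq:r2} and the fact that $|g|<1$ off the origin) --- the pairs near the shared endpoint $a_{i+1}=b_i$, for which $r\to g(0)=1$, are positive and so absent from $E$. In every case, writing $u_i^2=m(y_i)^2=2\log a_i+O(\log_2 a_i)$, the factor $\exp\!\big(-\tfrac{u_i^2+u_j^2}{2(1+|r|)}\big)$ is of order $a_i^{-\kappa}$ with $\kappa>1$; since each block carries only polylogarithmically many grid points ($L_iN_i\le K(\log a_i)^{2\gamma/(1-\alpha_\infty)}$) and $a_i\ge S+if_p(S)\ge S+i$, summing over $i,j$ yields $E\le KS^{-\rho}$.

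For the lower bound on $\prod_i\mathbb P(A_i)$, write $\mathbb P(A_i)=1-p_i$. As $S\to\infty$ all $y_i\in[f_p(S),\mathcal Cf_p(S)]$ grow to infinity and both the grid parameter $\theta=\Delta(S)/S$ and $\theta_i=(m(y_i))^{-4/\hat\alpha}$ tend to $0$; since moreover the level $u_i$ differs from $m(y_i)$ by $O((m(y_i))^{-3})=o(1/m(y_i))$, standard Gaussian asymptotics together with \autoref{lem:disc_asymp} (applied with $|\tilde M_i|=1$), \eqref{eq:basic_transformation} and \autoref{lem:asymptotics} give, uniformly in $i$,
\[
p_i\ \le\ \big(\mathcal H_{\eta_{\alpha_\infty}}^{\theta}/\mathcal H_{\eta_{\alpha_\infty}}\big)^{2}\psi(y_i)(1+o(1))\ \le\ (1+\varepsilon)\psi(y_i),
\]
the last step because $\mathcal H_{\eta_{\alpha_\infty}}^{\theta}\le\mathcal H_{\eta_{\alpha_\infty}}$. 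The $p_i$ being then uniformly small, $\prod_i\mathbb P(A_i)\ge\exp(-(1+\varepsilon)\sum_i p_i)$. By \eqref{eq:fp&CbyEq} the map $u\mapsto\psi(f_p(u))/f_p(u)$ is regularly varying of index $-1$ and ultimately decreasing, so $\int_{M_i}\psi(f_p(u))/f_p(u)\,\D u=\psi(y_i)(1+o(1))$ uniformly; summing over $i$, and noting that $\bigcup_{i=0}^{n}M_i=(S,b_n]$ with $T\le b_n\le T+O(f_p(S))$ (the cover overshoots $T$ by at most one block), one gets
\[
\sum_{i=0}^{n}p_i\ \le\ (1+\varepsilon)\int_{S}^{T}\frac{\psi(f_p(u))}{f_p(u)}\,\D u\ +\ O(\psi(f_p(S))).
\]
As $\psi(f_p(S))\to 0$, the overshoot term is absorbed into a constant factor $\ge\tfrac12$, so $\prod_i\mathbb P(A_i)\ge\tfrac12\exp\!\big(-(1+\varepsilon)\int_S^T f_p(u)^{-1}\psi(f_p(u))\,\D u\big)$; combining this with $E\le KS^{-\rho}$ (and using $\tfrac14\le\tfrac12$, after relabeling $\varepsilon$) proves the lemma.

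The main obstacle is the control of $E$ across \emph{adjacent} blocks. Unlike in \autoref{lem:bound}, the intervals $M_i$ here are contiguous ($a_i=b_{i-1}$), so there is no normalized gap between consecutive blocks, and the cross-block correlations near the common endpoint tend to $g(0)=1$, putting \eqref{eq:r2} out of reach there. The point that makes the lower bound go through --- and that is not needed in \autoref{lem:bound}, where the gaps $\varepsilon x_k$ between intervals keep all cross-block separations $\ge\varepsilon/2$ --- is that these strongly correlated cross-block pairs are \emph{positively} correlated and hence disappear from Berman's error term, leaving only pairs that are far in block index (polynomially decorrelated by \eqref{eq:rboundOrder}) or at separation bounded away from $0$ (correlation bounded away from $\pm1$). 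A secondary, more routine point is to keep every $o(1)$ estimate uniform over the growing family of blocks, which is precisely the role of the hypothesis $f_p(T)/f_p(S)\le\mathcal C$.
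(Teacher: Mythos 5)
Your proposal follows essentially the same route as the paper's proof: Berman's inequality against a block-wise independent comparison field, so that only negatively correlated cross-block pairs enter the error term (exactly how the paper's \eqref{eq:est2} disposes of the strongly correlated pairs near the shared endpoints of adjacent blocks, with \eqref{eq:rboundOrder} and \eqref{eq:r2} handling distant and well-separated pairs as in the $P_2$ estimate of \autoref{lem:bound}), together with the same discrete-to-continuous tail comparison and sum-to-integral conversion for the product term. The only blemish is the parenthetical claim that the cover overshoots $T$ by at most one block: since the blocks have lengths $f_p(a_i)\ge f_p(S)$ while their number is fixed at $[(T-S)/f_p(S)]+1$, the overshoot can be of order $\left(f_p(T)/f_p(S)-1\right)(T-S)$, i.e.\ many blocks; this is harmless because its contribution to the integral is of the same relative order and vanishes in the regime where the lemma is applied ($f_p(T)/f_p(S)\to1$), which is also how the paper itself treats this step.
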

\begin{proof}
Put
\[
\hat m(y_i) =  m(y_i) - \frac{\theta_i^{\frac{\hat\alpha}{2}}}{m(y_i)}, \quad I = [(T-S)/f_p(S)].
\]
Similarly as in the proof of \autoref{lem:bound} we find that Berman's inequality implies
\begin{align*}
\mathbb P &\left(
\bigcap_{i=0}^{I}\left\{
\max_{\substack{0\le l\le L_i\\0\le |n| \le N_i}} Z_{y_i}(s_{i,l},\tau_{i,n}) \le m(y_i) - \frac{\theta_i^{\frac{\hat\alpha}{2}}}{m(y_i)}
	\right\}
	\right)\\
&\ge
\prod_{i=0}^{I} \prob{\max_{\substack{0\le l\le L_i\\0\le |n| \le N_i}} Z_{y_i}(s_{i,l},\tau_{i,n}) \le
m(y_i) - \frac{\theta_i^{\frac{\hat\alpha}{2}}}{m(y_i)}}
 -
\sum_{0\le i<j\le I} D_{i,j}=:P_1'+P_2',
\end{align*}
where
\[
D_{i,j} = \frac{1}{2\pi}\sum_{\substack{0\le l\le L_i\\0\le p\le L_j}}\sum_{\substack{|n|\le N_i\\|m|\le N_j}}
\frac{(\tilde r_{y_i,y_j}(s_{i,l},\tau_{i,n},s_{j,p},\tau_{j,m}))^+}{\sqrt{1-\tilde r^2_{y_i,y_j}(s_{i,l},\tau_{i,n},s_{j,p},\tau_{j,m})}}\
\exp\left(-\frac{\half(\hat m^2(y_i)+\hat m^2(y_j))}{1+|\tilde r_{y_i,y_j}(s_{i,l},\tau_{i,n},s_{j,p},\tau_{j,m})|}\right),
\]
with
\[
\tilde r_{y_i,y_j}(s_{i,l},\tau_{i,n},s_{j,p},\tau_{j,m})
= - r_{y_i,y_j}(s_{i,l},\tau_{i,n},s_{j,p},\tau_{j,m}).
\]

\noindent\textit{Estimation of $P_1'$:}

\vb

By \autoref{lem:asymptotics} the correction term $\theta_i^{\hat\alpha/2}/m(y_i)$ does not change the order of asymptotics of the tail of $Z_{y_i}$.
Furthermore, the tail asymptotics of the supremum on the strip $(s,\tau)\in\tilde M_i\times J(y_i)$ are of the same order if $\tau\ge 0$. Hence, for every $\varepsilon>0$,
\begin{align*}
P_1' &
\ge
\frac{1}{4}
 \exp\left(
-\sum_{i=0}^{I} \prob{\max_{\substack{0\le l\le L_i\\0\le |n| \le N_i}} Z_{y_i}(s_{i,l},\tau_{i,n}) > \hat m(y_i)}
\right)\\
&\ge
\frac{1}{4}
 \exp\left(
-\sum_{i=0}^{I} \prob{\sup_{\substack{s\in\tilde M_i\\ \tau\in J(y_i)}} Z_{y_i}(s,\tau) > m(y_i) - \frac{\theta_i^{\frac{\hat\alpha}{2}}}{m(y_i)}}
\right)
\\
&\ge
\frac{1}{4}
 \exp\left(
-(1+\varepsilon)\sum_{i=0}^{I} \prob{\sup_{\substack{s\in\tilde M_i\\ \tau\ge 0}} Z_{y_i}(s,\tau) > m(y_i)}
\right)
\\
&=
\frac{1}{4}
 \exp\left(
-(1+\varepsilon)\sum_{i=0}^{I} \prob{\sup_{t\in[0,f_p(a_i)]} Q_{X}(t) > f_p(a_i)}
\right)\\
&\ge
\frac{1}{4}\exp\left(
-(1+\varepsilon)
\int_{S}^{T}
\frac{1}{f_p(u)}\prob{\sup_{t\in[0,f_p(u)]} Q_{X}(t) > f_p(u)}\D u
\right),
\end{align*}
provided that $S$ is sufficiently large along the same lines as the estimation of $P_1$ in \autoref{lem:bound}.
\vb

\noindent\textit{Estimation of $P_2'$:}

\vb

Clearly, for $j\ge i+2$, and any $0\le l\le L_i$, $0\le p\le L_j$; c.f. \eqref{eq:def_ai2},
\[
y_j s_{j,p} - y_i s_{i,l} = a_j + y_j p q_j - \left(a_i + y_i l  q_i\right) \ge \sum_{k=i+1}^{j-1} y_k.
\]
Hence there exists $s_0\geq 2$ such that for $j-i\geq s_0$, $0\le l\le L_i$, $0\le p\le L_j$, $|n|\leq N_i$, $|m|\leq N_j$ and $S$ sufficiently large
\begin{align*}
\frac{y_j\tau_{j,m}+y_i\tau_{i,n}}{|y_j s_{j,p} - y_i s_{i,l}|}\leq \frac{y_j(\tau_{j,m}+\tau_{i,n})}{\sum_{k=i+1}^{j-1} y_k}\leq \frac{1}{3}.
\end{align*}
Analogously as the derivation of (\ref{upperboundcor}), by \eqref{eq:rboundOrder}  for $j-i\geq s_0$ and $S$ sufficiently large
\[
r^*_{i,j}:=\sup_{
	\substack{		
		0\le l\le L_i,
		0\le p\le L_j\\
		|n|\le N_i,
		|m|\le N_j
	}
} |\tilde r_{y_i,y_j}(s_{i,l},\tau_{i,n},s_{j,p},\tau_{j,m})|\le K (k-t)^{-\lambda}\le  \frac{\lambda}{4},
\]
where $\lambda = 1-\alpha_\infty$. For $1\leq j-i\leq s_0$, it follows that $y_i\sim y_j$,  $\tau_{i,n}\to \tau^*$ and $\tau_{j,m}\to \tau^*$ as $S\to\infty$,  and $s_{i,l}-s_{j,p}\geq  y_{i+1}/y_j>\frac{1}{2}$  for $2\leq j-i\leq s_0$ and $S$ sufficiently large.
Therefore, by (\ref{eq:r2}) there exists a positive constant $\zeta_1\in(0,1)$ depending only on $\varepsilon$ such that for $S$ sufficiently large
\begin{align}\label{upperboundcor2}
\sup_{2\leq j-i\leq s_0}r^*_{i,j}=\sup_{2\leq j-i\leq s_0}\sup_{
	\substack{		
		0\le l\le L_i,
		0\le p\le L_j\\
		|n|\le N_i,
		|m|\le N_j
	}
} | r_{y_i,y_j}(s_{i,l},\tau_{i,n},s_{j,p},\tau_{j,m})|
\le\zeta_1.
\end{align}

Moreover, by
\eqref{eq:r1} there exist positive constants $\delta\in(0,1)$ and $c_\delta\in(0,\half)$, $M<1$, such that,
for sufficiently large $S$,
\[
\inf_{|y_i-y_j|<c_{\delta}, |\tau-\tau^*|, |\tau'-\tau^*|\leq M }
r_{y_i,y_j}(s,\tau,s',\tau')> \frac{\delta}{2}.
\]
Hence  for sufficiently large $S$ and $0\le l\le L_i$, $0\le p\le L_j$, $|n|\leq N_i$, $|m|\leq N_j$
\begin{align}
\label{eq:est2}
(\tilde r_{y_i,y_j}(s_{i,l},\tau_{i,n},s_{j,p},\tau_{j,m}))^+ = 0, &\quad \text{if}\quad
j=i+1, \quad |s_{i,l}-s_{j,p}| \le c_\delta.
\end{align}
By (\ref{eq:r2}) there exits $\zeta_2\in (0,1)$ such that for $S$ sufficiently large and $0\le l\le L_i$, $0\le p\le L_j$, $|n|\leq N_i$, $|m|\leq N_j$
\begin{align}
\label{eq:est3}
|\tilde r_{y_i,y_j}(s_{i,l},\tau_{i,n},s_{j,p},\tau_{j,m})| \le
\zeta_2,
&\quad\text{if}\quad
j=i+1,\quad |s_{i,l}-s_{j,p}| \ge c_\delta.
\end{align}
Let $\zeta=\max(\zeta_1, \zeta_2)$.
Therefore, by \eqref{upperboundcor1}--\eqref{eq:est3} we obtain
\begin{align*}
P_2' &\le
\sum_{\substack{0\le i\le I-1 \\ 1\leq j-i\leq s_0}}\sum_{\substack{0\le l\le L_i\\0\le p\le L_j}}\sum_{\substack{|n|\le N_i\\|m|\le N_j}}
\frac{1}{\sqrt{1- \zeta}}
\exp\left(-\frac{\half(\hat m^2(y_i)+\hat m^2(y_j))}{1+\zeta}\right)\\
&\quad +
\sum_{\substack{0\le i\le I-2 \\ i+s_0 \le j \le I }}\sum_{\substack{0\le l\le L_i\\0\le p\le L_j}}\sum_{\substack{|n|\le N_i\\|m|\le N_j}}
\frac{r^*_{i,j}}{\sqrt{1- r^*_{i,j}}}
\exp\left(-\frac{\half(\hat m^2(y_i)+\hat m^2(y_j))}{1+r^*_{i,j}}\right).
\end{align*}
Completely similar to the estimation of $P_2$ in the proof of \autoref{lem:bound}, we can arrive that there exist positive constants $K$ and $\rho$ such that, for sufficiently large $S$,
\[
P_2'\le K S^{-\rho}.
\]
\end{proof}

The next lemma is a straightforward modification of \citep[Lemma 3.1 and Lemma 4.1]{Watanabe70}, see also \citep[Lemma 1.4]{Qualls71}.
\begin{lem}
\label{lem:v2}
\rd{It is enough to proof \autoref{thm:equiv} for any nondecreasing function $f$ such that,
\begin{equation}
\label{eq:frestricted}
\overleftarrow{m}\left(\sqrt{\log t}\right) \le f(t) \le  \overleftarrow{m}\left(\sqrt{3\log t}\right),
\end{equation}
for all $t\ge T$, and $T$ large enough.}
\end{lem}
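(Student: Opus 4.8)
The plan is to run the Watanabe--Qualls reduction, feeding the now-available restricted form of \autoref{thm:equiv} back in through two boundary functions. First I would set $g_1(t)=\overleftarrow{m}(\sqrt{\log t})$ and $g_3(t)=\overleftarrow{m}(\sqrt{3\log t})$: both are nondecreasing, tend to infinity, satisfy $m(g_i(t))=\sqrt{i\log t}$ for $i=1,3$, and grow like a fixed power of $\log t$ (from the asymptotics of $\overleftarrow{m}$ implicit in {\bf AI}), so they obey \eqref{eq:frestricted}. From \autoref{lem:asymptotics}, together with $\Psi(x)\sim(\sqrt{2\pi}\,x)^{-1}e^{-x^2/2}$ and $m(u)\sim\mathrm{const}\cdot u^{1-\alpha_\infty}$, one gets $\psi(u)\sim\mathrm{const}\cdot u^{\gamma}m(u)^{-1}\Psi(m(u))$; hence $\psi$ is eventually strictly decreasing and $\psi(g_3(t))/g_3(t)$, $\psi(g_1(t))/g_1(t)$ equal, up to positive constants, a power of $\log t$ times $t^{-3/2}$ resp.\ $t^{-1/2}$, so $\int^\infty\psi(g_3)/g_3<\infty$ while $\int^\infty\psi(g_1)/g_1=\infty$. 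Consequently $\prob{Q_X(t)>g_3(t)\ \text{i.o.}}=0$ (which even follows directly from summability of $\psi(g_3(n))$) and $\prob{Q_X(t)>g_1(t)\ \text{i.o.}}=1$ (by the restricted form of \autoref{thm:equiv} applied to the admissible function $g_1$).

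Next, given any positive nondecreasing $f$ (modifying it on a compact set is harmless): if $f\ge g_3$ on $[T,\infty)$ then $\prob{Q_X>f\ \text{i.o.}}\le\prob{Q_X>g_3\ \text{i.o.}}=0$ and $\int^\infty\psi(f)/f\le\int^\infty\psi(g_3)/g_3<\infty$; if $f\le g_1$ on $[T,\infty)$ then $\prob{Q_X>f\ \text{i.o.}}\ge\prob{Q_X>g_1\ \text{i.o.}}=1$ and $\int^\infty\psi(f)/f\ge\int^\infty\psi(g_1)/g_1=\infty$; in both cases \autoref{thm:equiv} for $f$ is immediate. Otherwise I would set $\widehat f=\max(g_1,\min(f,g_3))$, which is nondecreasing and obeys \eqref{eq:frestricted}, and write $A=\{f>g_3\}$, $B=\{g_1\le f\le g_3\}$, $C=\{f<g_1\}$, so $\widehat f$ equals $g_3$ on $A$, $f$ on $B$, $g_1$ on $C$. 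Since $\psi$ is decreasing, $\int_A\psi(f)/f\le\int^\infty\psi(g_3)/g_3<\infty$ (and likewise with $\widehat f$), while $\{t\in A:Q_X(t)>f(t)\}\subseteq\{t:Q_X(t)>g_3(t)\}$ is a.s.\ bounded by the first paragraph; thus the $A$-regime contributes nothing to either side of \autoref{thm:equiv} for $f$. Also on $C$ we have $\psi(f)/f\ge\psi(g_1)/g_1=\psi(\widehat f)/\widehat f$.

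Now compare $f$ with $\widehat f$. If $\int^\infty\psi(\widehat f)/\widehat f=\infty$, the restricted theorem gives $\prob{Q_X>\widehat f\ \text{i.o.}}=1$; since the $A$-crossings of $\widehat f$ are a.s.\ bounded, a.s.\ infinitely many crossings of $\widehat f$ lie in $B\cup C$, where $\widehat f\ge f$, so $\prob{Q_X>f\ \text{i.o.}}=1$, and moreover $\int^\infty\psi(f)/f\ge\int_{B\cup C}\psi(f)/f\ge\int_{B\cup C}\psi(\widehat f)/\widehat f=\infty$. If $\int^\infty\psi(\widehat f)/\widehat f<\infty$, the restricted theorem gives $\prob{Q_X>\widehat f\ \text{i.o.}}=0$, which (as $f>g_3=\widehat f$ on $A$ and $f=\widehat f$ on $B$) makes the $A\cup B$-crossings of $f$ a.s.\ bounded, so only crossings of $f$ in $C$ remain, where $f<g_1$. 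If $\int_C\psi(f)/f<\infty$ then $\int^\infty\psi(f)/f<\infty$, and a first Borel--Cantelli along $C$ gives $\prob{Q_X>f\ \text{i.o.}}=0$: cover $C$ by disjoint intervals $I_k=[a_k,a_{k+1})$ on which $f$ varies by a factor $1+o(1)$ and which are long enough that, by \autoref{lem:asymptotics} via \eqref{eq:basic_transformation}, $\prob{\sup_{t\in I_k}Q_X(t)>f(a_k)}$ is comparable to $\psi(f(a_k))$, hence to $\int_{I_k}\psi(f)/f$, so the exceedance probabilities have finite sum. If instead $\int_C\psi(f)/f=\infty$ (so $f$ is much smaller than $g_1$ along $C$), then $\int^\infty\psi(f)/f=\infty$ and one shows $\prob{Q_X>f\ \text{i.o.}}=1$ by applying the divergence part of the Borel--Cantelli lemma recalled above to $E_k=\{\exists\,t\in I_k\cap C:Q_X(t)>f(t)\}$: these events are individually non-negligible there, and the correlation decay \eqref{eq:rboundOrder} (transferred from $Z_u$ to $Q_X$) supplies the required asymptotic independence via Berman's inequality --- precisely the mechanism behind \autoref{lem:lbound} and the estimate of $P_2$ in \autoref{lem:bound}.

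The only genuine difficulty is this last sub-case: the intervals $I_k$ along $C$ must be chosen simultaneously long enough that $\prob{\sup_{I_k}Q_X>f(a_k)}$ is governed by \autoref{lem:asymptotics} (i.e.\ no shorter than the decorrelation scale at level $f(a_k)$) and short enough that $f$ is essentially constant on each $I_k$, and then one must verify that $\sum_k\prob{E_k}=\infty$ (matching $\int_C\psi(f)/f$) and that the Berman-inequality error terms are $o\big((\sum_{k\le n}\prob{E_k})^2\big)$ --- the same computation as in the proof of \autoref{lem:bound}. Everything else is routine bookkeeping with the monotonicity of $\psi$ and the elementary asymptotics of $\psi$ and $m$ from \autoref{lem:asymptotics}.
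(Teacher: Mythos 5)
Your skeleton---introduce $g_1(t)=\overleftarrow{m}(\sqrt{\log t})$ and $g_3(t)=\overleftarrow{m}(\sqrt{3\log t})$, kill the upper regime by convergence of $\int\psi(g_3)/g_3$, and truncate a general $f$ into the band---is the classical Watanabe--Qualls reduction that the paper simply cites for this lemma, and your first two paragraphs are sound. The genuine gap is the sub-case you yourself single out: $\int\psi(\hat f)/\hat f<\infty$ but $\int_C\psi(f)/f=\infty$ on $C=\{f<g_1\}$. Your plan there is to run the divergence half of Borel--Cantelli on exceedances of $f$ itself over $C$ and to control the cross terms ``by the same computation as in the proof of \autoref{lem:bound}''. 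That borrowing is not legitimate: every quantitative input of that computation is tied to the level lying in the band \eqref{eq:frestricted} --- the identification $\exp(-m^2(x_k)/2)\asymp(\log t_k)^{\cdot}\,t_k^{-1}$, the admissible interval length $T<\exp(c\,m^2(u))$ with $c<\tfrac12$ in \autoref{lem:asymptotics}, and the final summation yielding $KS^{-\rho}$. Indeed, the only place the paper uses the present lemma is exactly to justify those estimates for a general $f$. On $C$, however, $f$ can lie far below $g_1$ (this is precisely how $\int_C\psi(f)/f$ can diverge while $\int_C\psi(g_1)/g_1$ converges), and then the Berman factors $\exp\bigl(-m^2(f)/(1+\rho)\bigr)$ are no longer small, while making the individual events non-negligible would require blocks of length of order $f/\psi(f)\approx e^{m^2(f)/2}$, outside the range of \eqref{eq:tauSmaller}. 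So the hardest step of your proof is the one you have not done, and it cannot be carried out by the cited computation.

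What is missing is the one idea the classical reduction actually exploits: the monotonicity of $f$. If $f(t_0)<g_1(t_0)$, then $f\le f(t_0)<g_1(t_0)$ on all of $[T,t_0]$, so any crossing of the constant, correctly calibrated level $g_1(t_0)$ inside a long block $(t_0-t_0^{1/2+\delta},t_0]$ is a crossing of $f$; at that level $m^2=\log t_0$ the discretization/Berman machinery of \autoref{lem:bound} does apply for small $\delta$, the no-crossing probability tends to $0$, and $\mathbb{P}(Q_X(t)>f(t)\ \mathrm{i.o.})=1$ follows from $\mathbb{P}(\limsup_n A_n)\ge\limsup_n\mathbb{P}(A_n)$, with the divergence of $\int\psi(f)/f$ obtained as in your own estimate. (Alternatively, the same monotonicity shows your troublesome sub-case is vacuous: apart from finitely many exceptional components, each component $(s_n,e_n)$ of $C$ satisfies $f\ge g_1(s_n)$ on it, and $\int_C\psi(g_1)/g_1<\infty$ forces $e_n\le 2s_n$ eventually, whence $\psi(f)/f\le K\,\psi(g_1)/g_1$ on $C$ and $\int_C\psi(f)/f<\infty$, contradicting the assumption.) A lesser remark: in the convergent sub-case, intervals ``on which $f$ varies by a factor $1+o(1)$'' need not exist for a nondecreasing $f$ with jumps; the bound $\mathbb{P}(\text{crossing in }I)\le\psi\bigl(\inf_{I\cap C}f\bigr)$ with $|I|$ equal to that infimum repairs this. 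The real defect is the divergent sub-case above, where monotonicity of $f$ must enter.
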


\section{Proof of the main results}
\label{sec:Proofs}
\begin{proof}[\bf{Proof of \autoref{thm:equiv}}]
\noindent Note that the case $\mathscr I_f <\infty$ is straightforward and does not need any
additional knowledge on the process $Q_{X}$ apart from the  property of stationarity.
Indeed, consider the sequence of intervals
$M_i$ as in \autoref{lem:lbound}. Then, for any $\varepsilon>0$ and sufficiently large $T$,
\[
\sum_{k = [T]+1}^\infty
\prob{\sup_{t\in M_{k}} Q_{X}(t) > f(a_{k})}
=
\sum_{k= [T]}^\infty \prob{\sup_{t\in[0, f(b_k)]} Q_{X}(t) > f(b_k)}
\le
\mathscr I_f <\infty,
\]
and the Borel-Cantelli lemma completes this part of the proof since $f$ is an increasing function.

Now let $f$ be an increasing function such that $\mathscr I_f\equiv\infty$. With the same notation as in
\autoref{lem:bound} with $f$ instead of $f_p$, we find that, for any $S,\varepsilon,\theta>0$,
\begin{align*}
\prob{Q_{X}(s) > f(s)\text{ i.o.}} &\ge \prob{\left\{\sup_{t\in I_k} Q_{X}(t) > f(t_k)\right\}\quad\text{i.o.}}\\
&\ge
\prob{ \left\{ \max_{\substack{ 0\le l\le L_k\\\quad 0\le |n| \le N_k}} Z_{x_k}(s_{k,l},\tau_{k,n})> m(x_k)\right\}\text{ i.o.}}.
\end{align*}
Let
\[
E_k = \left\{ \max_{\substack{ 0\le l\le L_k\\\quad 0\le |n| \le N_k}} Z_{x_k}(s_{k,l},\tau_{k,n})\le m(x_k)\right\}.
\]
For sufficiently large $S$ and sufficiently small $\theta$; c.f., estimation of $P_1$, we get
\begin{equation}
\label{eq:sum}
\sum_{k=0}^\infty
\prob{ E_k^c}
\ge
\rd{(1-\varepsilon)}
\int_{S+f(S)}^{\infty}
\frac{1}{f(u)}\prob{\sup_{t\in [0,f(u)]} Q_{X}(t) >  f(u)}\D u
=\infty.
\end{equation}
Note that
\[
1-\prob{E_i^c\quad \text{i.o.}}
=
\lim_{m\toi}\prod_{k=m}^\infty\prob{E_k} +
\lim_{m\toi}\left(\prob{\bigcap_{k=m}^\infty E_k} - \prod_{k=m}^\infty\prob{E_k}\right).
\]
The first limit is zero as a consequence of \eqref{eq:sum}, and the second limit will be zero because of the asymptotic independence of the events $E_k$. Indeed, there exist positive constants $K$ and $\rho$, depending only on $\alpha_0,\alpha_\infty,\varepsilon,\lambda$, such that for any $n>m$,
\[
A_{m,n}=\left|\prob{\bigcap_{k=m}^n E_k} - \prod_{k=m}^n\prob{E_k}\right|\le K (S+m)^{-\rho},
\]
by the same calculations as in the estimate of $P_2$ in \autoref{lem:bound} after realizing that, by \autoref{lem:v2},
we might restrict ourselves to the case when
\eqref{eq:frestricted} holds. Therefore $\prob{E_i^c \text{ i.o.}} =1$, which finishes the proof.
\end{proof}

\noindent \textbf{Proof of \autoref{thm:main}:}

\vb

\noindent \rd{Let $\xi_p\equiv \xi_{f_p}$ for short.}

\vb

\noindent\textit{Step 1.}
Let $p>1$, then, for every $\varepsilon\in(0,\frac{1}{4})$,
\[
\liminf_{t\toi}\frac{\xi_p(t) -t}{h_p(t)}\ge -\rd{(1+2\varepsilon)}\quad\text{a.s.}
\]
\begin{proof}
Let $\{T_k:k\ge 1\}$ be a sequence such that $T_k\toi$, as $k\toi$.
Put $S_k =T_k -\rd{(1+2\varepsilon)} h_p(T_k)$. Since $h_p(t)=O(t\log^{1-p} t \log_2 t)$, then, for $p>1$,
$S_k\sim T_k$, as $k\toi$, and from \autoref{lem:bound} it follows that
\begin{align*}
\mathbb P\Big (&\frac{\xi_p(T_k) - T_k}{h_p(T_k)}\le -(1+2\varepsilon)^2\Big )
=
\prob{\xi_p(T_k)\le S_k}
=
\prob{\sup_{S_k<t\le T_k}\frac{Q_{X}(t)}{f_p(t)}< 1}\\
&\le
\exp\left(
-\rd{(1-\varepsilon)}
\int_{S_k+f_p(S_k)}^{T_k}
\frac{1}{f_p(u)}
\prob{\sup_{t\in [0,f_p(u)]} Q_{X}(t) >  f_p(u)}\D u
\right)
+ 2K T_k^{-\rho}.
\end{align*}
Moreover, as $k\toi$,
\begin{align}
\label{eq:logasympt}
\nonumber
\int_{S_k+f_p(S_k)}^{T_k}&
\frac{1}{f_p(u)}
\prob{\sup_{t\in [0,f_p(u)]} Q_{X}(t) >  f_p(u)}\D u\\
&\sim
\rd{(1+2\varepsilon)} h_p(T_k)
\frac{1}{f_p(T_k)}
\prob{
	\sup_{t\in [0,f_p(T_k)]}
		Q_{X}(t) >  f_p(T_k)}
=
\rd{(1+2\varepsilon)} p \log_2 T_k.
\end{align}
Now take $T_k = \exp(k^{1/p})$. Then,
\[
\sum_{k=1}^\infty
\prob{\xi_p(T_k)\le S_k}
\le
2K\sum_{k=1}^\infty k^{-(1+\varepsilon/2)}<\infty.
\]
Hence by the Borel-Cantelli lemma,
\begin{equation}
\label{eq:step}
\liminf_{k\toi}\frac{\xi_p(T_k)-T_k}{h_p(T_k)}\ge -\rd{(1+2\varepsilon)}\quad\text{a.s.}.
\end{equation}
Since $\xi_p(t)$ is a non-decreasing random function of $t$, for every $T_k\le t\le T_{k+1}$, we have
\begin{align*}
\frac{\xi_p(t)-t}{h_p(t)}\ge
\frac{\xi_p(T_k)-T_k}{h_p(T_k)}- \frac{T_{k+1}-T_k}{h_p(T_k)}.
\end{align*}
For $p >1$ elementary calculus implies
\[
\lim_{k\toi}\frac{T_{k+1}- T_k}{ h_p (T_k)}  =0,
\]
so that
\[
\liminf_{t\toi}\frac{\xi_p(t)-t}{h_p(t)}\ge\liminf_{k\toi}\frac{\xi_p(T_k)-T_k}{h_p(T_k)}\quad\text{a.s.},
\]
which finishes the proof of this step.
\end{proof}

\noindent\textit{Step 2.}
Let $p>1$, then, for every $\varepsilon\in(0,1)$,
\[
\liminf_{t\toi}\frac{\xi_p(t)-t}{h_p(t)}\le -(1-\varepsilon)\quad\text{a.s.}
\]
\begin{proof}
As in the proof of the lower bound, put
\[
T_k =\exp(k^{(1+\varepsilon^2)/p}),\quad S_k =T_k -(1-\varepsilon) h_p(T_k),\quad k\ge 1.
\]
Let
\[
B_k = \{\xi_p(T_k)\le S_k\}=\left\{\sup_{S_k<t\le T_k}\frac{Q_{X}(t)}{f_p(t)}<1\right\}.
\]
It suffices to show $\prob{B_n \text{ i.o.}}=1$, that is
\begin{equation}
\label{eq:ub_goal}
\lim_{m\toi}\prob{\bigcup_{k=m}^\infty B_k} =1.
\end{equation}
Let
\[
a_0^k = S_k, \quad y_0^k = f_p(a_0^k),\quad b_0^k = a_0^k + y_0^k,
\]
\[
a_i^k = b_{i-1}^k,\quad y_i^k = f_p(a_i^k),\quad  b_i^k = a_i^k +y_i^k,\quad M_i^k = (a_i^k, b_i^k],\quad
\tilde M_i^k = \frac{M_i^k}{ y_i^k}=(\tilde a_i^k, \tilde b_i^k].
\]
Define $J_k$ to be the biggest number such that $b_{J_k-1}^k\le T_k$ and $b_{J_k}^k> T_k$. Note that $J_k\le [(T_k-S_k)/f_p(S_k)]$.

Since $f_p$ is an increasing function,
\[
B_k \supset \bigcap_{i=0}^{J_k} \left\{\sup_{t\in M_i^k}\frac{Q_{X}(t)}{f_p(t)}<1\right\}
\supset \bigcap_{i=0}^{J_k} \left\{\sup_{t\in M_i^k}Q_{X}(t)< y_i^k\right\}
= \bigcap_{i=0}^{J_k} \left\{\sup_{\substack{s\in \tilde M_i^k \\ \tau\ge 0}} Z_{y_i^k}(s,\tau)< m(y_i^k)\right\}.
\]
Analogously to \eqref{eq:disc2}, define a discretization of the set $\tilde M_i^k\times J(y_i^k)$ as follows
\begin{align*}
s_{i,l}^k &= \tilde a_i^k+ l q_i^k,\quad
0\le l\le L^k_i,\quad  L^k_i = [1/ q_i^k],
\quad q_i^k  = \theta_i^k \frac{\Delta(y_i^k)}{y_i^k},
\quad \theta_i^k  = \left(m(y_i^k)\right)^{-4/\hat\alpha},
\\
\tau_{i,n}^k &= \tau(y_i^k) + n q_i^k,\quad 0\le |n| \le N^k_i,\quad N^k_i = [\tau^*(y_i^k)/q_i^k].
\end{align*}
Recall that $\hat\alpha=\min(\alpha_0,\alpha_\infty)$ and let
\[
A_k=\bigcap_{i=0}^{J_k}\left\{\max_{
\substack{0\le l\le L^k_i\\0\le |n| \le N^k_i}} Z_{y_i^k}(s_{i,l}^k,\tau_{i,n}^k)\le m(y_i^k)-\frac{(\theta_i^k)^{\frac{\hat\alpha}{2}}}{m(y_i^k)}\right\}.
\]
Observe that
\[
\prob{\bigcup_{k=m}^\infty A_k} \le \prob{\bigcup_{k=m}^\infty B_k}+\sum_{k=m}^\infty\prob{A_k\cap B_k^c}.
\]
Furthermore,
\begin{align}
\sum_{k=m}^\infty\prob{A_k\cap B_k^c} &\le \sum_{k=m}^\infty\sum_{i=0}^{J_k}
\prob{\max_{
\substack{0\le l\le L^k_i\\0\le |n| \le N^k_i}} Z_{y_i^k}(s_{i,l}^k,\tau_{i,n}^k)\le m(y_i^k)-\frac{(\theta_i^k)^{\frac{\hat\alpha}{2}}}{m(y_i^k)}, \sup_{\substack{s\in \tilde M_i^k \\ \tau\ge 0}} Z_{y_i^k}(s,\tau)\ge m(y_i^k)}
\nonumber\\
&\le
\sum_{k=m}^\infty\sum_{i=0}^{J_k}
\prob{
	\max_{
\substack{0\le l\le L^k_i\\0\le |n| \le N^k_i}} Z_{y_i^k}(s_{i,l}^k,\tau_{i,n}^k)\le m(y_i^k)-\frac{(\theta_i^k)^{\frac{\hat\alpha}{2}}}{m(y_i^k)},
    \sup_{
    	\substack{
    		s\in \tilde M_i^k \\
    		\tau\in J(y_i^k)}
    	} Z_{y_i^k}(s,\tau)\ge m(y_i^k)}
\nonumber\\
\label{eq:dummy}
&\quad +
\sum_{k=m}^\infty\sum_{i=0}^{J_k}
\prob{
	\sup_{
		\substack{
			s\in \tilde M_i^k \\
			\tau \notin J(y_i^k)
		}
	}
		Z_{y_i^k}(s,\tau)\ge m(y_i^k)
}.
\end{align}
By \autoref{lem:asymptotics} and \autoref{lem:discreate_approx}, for sufficiently large $m$ and some $K_1, K_2>0$, the first sum is bounded from above by
\begin{align*}
\sum_{k=m}^\infty\sum_{i=0}^{J_k}
&
K_1\frac{(y_i^k)^{\gamma}}{m(y_i^k)}\Psi(m(y_i^k))e^{-(m(y_i^k))^{3}/K_2}\\
&\le
\sum_{k=m}^\infty\sum_{i=0}^{J_k}
K_1\prob{\sup_{(s,\tau)\in[0,1]\times\rr_+}Z_{y_i^k}(s,\tau) > m(y_i^k)} e^{-(m(y_i^k))^{3}/K_2}\\
&\le
\sum_{k=m}^\infty\sum_{i=0}^{J_k}
K_1\prob{\sup_{t\in[0,f_p(a_i^k)]}Q_{X}(t) > f_p(a_i^k)} e^{-(\log a_i^k)^{3/2}/K_2}
\\
&\le
K
\int_m^\infty
\frac{\psi(f_p(x))}{f_p(x)}
e^{-\log^{3/2}(x)/K_2}\D x<\infty.
\end{align*}
Note that by \eqref{eq:tauBigger}, for sufficiently large $m$, the term in \eqref{eq:dummy} is bounded from above by
\[
K\sum_{k=m}^\infty\sum_{i=0}^{J_k}
\frac{(y_i^k)^{\gamma}}{m(y_i^k)}\Psi(m(y_i^k))
\exp\left(- \frac{b}{4} \log^2 m(y_i^k)\right)
\le
K
\int_m^\infty
\frac{\psi(f_p(x))}{f_p(x)}
e^{-\frac{b}{4}(\half\log_2 x))^2}\D x<\infty.
\]
Therefore
\[
\lim_{m\toi} \sum_{k=m}^\infty\prob{A_k\cap B_k^c} = 0
\]
and
\[
\lim_{m\toi}\prob{\bigcup_{k=m}^\infty B_k}\ge\lim_{m\toi} \prob{\bigcup_{k=m}^\infty A_k} .
\]
To finish the proof of \eqref{eq:ub_goal}, we only need to show that
\begin{equation}
\label{eq:generalBC}
\prob{A_n \text{ i.o.}}=1.
\end{equation}

Similarly to \eqref{eq:logasympt}, we have
\[
\int_{S_k}^{T_k}
\frac{1}{f_p(u)}
\prob{\sup_{t\in [0,f_p(u)]} Q_{X}(t) >  f_p(u)}\D u
	\sim
(1-\varepsilon) p \log_2 T_k.
\]
Now from \autoref{lem:lbound} it follows that
\[
\prob{A_k} \ge
\frac{1}{4}\exp\left(
-(1-\varepsilon^2)
p \log_2 T_k
\right) - K S_k^{-\rho}
\ge
\frac{1}{8}k^{-(1-\varepsilon^4)},
\]
for every $k$ sufficiently large. Hence,
\begin{equation}
\label{eq:sumA}
\sum_{k=1}^\infty \prob{A_k} =\infty.
\end{equation}

Applying Berman's inequality, we get for $t<k$
\begin{equation}
\label{eq:AkAt}
\prob{A_kA_t}\le \prob{A_k}\prob{A_t} + Q_{k,t},
\end{equation}
where,
\begin{align*}
Q_{k,t} =  \sum_{\substack{0\le i\le J_k \\ 0\le j \le J_t}}&\sum_{\substack{0\le l\le L_i^k\\0\le p\le L_j^t}}\sum_{\substack{|n|\le N_i^k\\|m|\le N_j^t}}
\frac{|r_{y_i^k,y_j^t}(s_{i,l}^k,\tau_{i,n}^k,s_{j,p}^t,\tau_{j,m}^t)|}{\sqrt{1- r^2_{y_i^k,y_j^t}(s_{i,l}^k,\tau_{i,n}^k,s_{j,p}^t,\tau_{j,m}^t)}}\\
&\times \exp\left(
-\frac{( m(y_i^k)-(m(y_i^k))^{-3})^2+( m(y_j^t)-(m(y_j^t))^{-3})^2}{2(1+ |r_{y_i^k,y_j^t}(s_{i,l}^k,\tau_{i,n}^k,s_{j,p}^t,\tau_{j,m}^t)|)}
\right).
\end{align*}
For any $0\le i\le J_k$,
$0\le j\le J_t$, $0\le l\le L_i^k$, $0\le p\le L_j^t$, and $t<k$,
\[
y_i^k s_{i,l}^k-y_j^t s_{j,p}^t =
a_i^k + y_i^k l  q_i^k- \left(a_j^t+ y_j^t p q_j^t\right)
\ge
S_k - T_t \ge S_k - T_{k-1}
\ge
\half(T_k-T_{k-1}),
\]
where the last inequality holds for $k$ large enough since it is easy to see that
\[
\frac{S_{k+1}-T_k}{T_{k+1}-T_k}\sim 1,\quad\as k.
\]
Thus, sufficiently large $k$ and every $0\le t<k$,
and a generic constant $K>0$, similarly to \eqref{upperboundcor} we have,
\[
\sup_{
	\substack{
		0\le i\le J_k\\	
		0\le j\le J_t\\	
		0\le l\le L_i^k,
		0\le p\le L_j^t\\
		|n|\le N_i^k,
		|m|\le N_j^t
	}
}
|r_{y_i^k,y_j^t}(s_{i,l}^k,\tau_n^k,s_{j,p}^t,\tau_{j,m}^t)|
\le
K (T_k - T_{k-1})^{-\lambda/2}
\le\frac{\min(1,\lambda)}{32}.
\]
Therefore, for some generic constant $K$ not depending on $k$ and $t$ which may vary between lines, for every $t<k$ sufficiently large,
\begin{align*}
Q_{k,t} &\le K \sum_{\substack{0\le i\le J_k \\ 0\le j \le J_t}}
L_i^kL_j^tN_i^kN_j^t (T_k-T_{k-1})^{-\lambda/2}
\exp\left(-\frac{(m(y_i^k))^2+(m(y_j^t))^2}{2(1+\frac{\lambda}{16})}\right)\\
&\le
K (T_k-T_{k-1})^{-\lambda/2}
(L_{J_k}^k L_{J_t}^t)^2
\sum_{\substack{0\le i\le J_k \\ 0\le j \le J_t}}
\left(
	a_i^k
	\log^{\frac{\gamma-1}{2(1-\alpha_\infty)}-p} a_i^k
\right)^{-\frac{1}{1+\frac{\lambda}{16}}}
\left(
	a_j^t
	\log^{\frac{\gamma-1}{2(1-\alpha_\infty)}-p}a_j^t
\right)^{-\frac{1}{1+\frac{\lambda}{16}}}
\\
&\le
K (T_k-T_{k-1})^{-\lambda/2}\left(\log T_k\right)^\upsilon
\left(T_k\right)^{\frac{\frac{\lambda}{8}}{1+\frac{\lambda}{8}}}
\left(T_t\right)^{\frac{\frac{\lambda}{8}}{1+\frac{\lambda}{8}}}
\\
&\le
K T_k^{-\lambda/8} \le K \exp(-\lambda k^{(1+\varepsilon^2)/p}/8),
\end{align*}
with $\upsilon>0$ a fixed constant.
Hence we have,
\begin{equation}
\label{eq:sumC}
\sum_{0\le t<k<\infty} Q_{k,t} <\infty.
\end{equation}
Now \eqref{eq:generalBC} follows from \eqref{eq:sumA}-\eqref{eq:sumC} and the general form of the Borel-Cantelli lemma.
\end{proof}

\vb

\noindent\textit{Step 3.}
If $p\in(0,1]$, then, for every $\varepsilon\in(0,\frac{1}{4})$,
\begin{equation}
\label{eq:step3eq1}
\liminf_{t\toi}\frac{\log\left(\xi_p(t)/t\right)}{h_p(t)/t}\ge -\rd{(1+2\varepsilon)}\quad\text{a.s.}
\end{equation}
and
\begin{equation}
\label{eq:step3eq2}
\liminf_{t\toi}\frac{\log\left(\xi_p(t)/t\right)}{h_p(t)/t}\le -(1-\varepsilon)\quad\text{a.s.},
\end{equation}
\begin{proof}
Put
\[
T_k = \exp(k),\quad S_k = T_k \exp\left(-\rd{(1+2\varepsilon)}h_p(T_k)/T_k\right).
\]
Proceeding the same as in the proof of \eqref{eq:step}, one can obtain that
\[
\liminf_{k\toi}\frac{\log\left(\xi_p(T_k)/T_k\right)}{h_p(T_k)/T_k}\ge -\rd{(1+2\varepsilon)}\quad\text{a.s.}
\]
On the other hand it is clear that
\[
\liminf_{t\toi}\frac{\log\left(\xi_p(t)/t\right)}{h_p(t)/t}
=
\liminf_{k\toi}\frac{\log\left(\xi_p(T_k)/T_k\right)}{h_p(T_k)/T_k}\quad\text{a.s.}
\]
since
\[
\liminf_{k\toi}\frac{\log\left(T_k/T_{k+1}\right)}{h_p(T_k)/T_k}=0.
\]
This proves \eqref{eq:step3eq1}.

Let
\[
T_k = \exp\left(k^{1+\varepsilon^2}\right),\quad S_k = T_k\exp\left(-(1-\varepsilon)h_p(T_k)/T_k\right).
\]
Noting that
\[
\frac{S_{k+1}-T_k}{S_{k+1}}\sim 1\quad\as k,
\]
along the same line as in the proof of \eqref{eq:ub_goal}, we also have
\[
\liminf_{k\toi}\frac{\log\left(\xi_p(T_k)/T_k\right)}{h_p(T_k)/T_k}\le -(1-\varepsilon)\quad\text{a.s.},
\]
which proves \eqref{eq:step3eq2}.
\end{proof}

\section{Appendix}
\rd{{\bf Proof of (\ref{g(t)})}. Let $g_1(t)=g(\tau^*t)$. Then it suffices to prove the claim in (\ref{g(t)}) for
$$g_1(t)=\frac{|1+t|^{2\alpha_\infty}+|1-t|^{2\alpha_\infty}-2|t|^{2\alpha_\infty}}{2}.$$
Note that $g_1(t)=g_1(-t), t\geq 0$, it is sufficient to prove the argument for $t\geq 0$.
We distinguish three scenarios: $0<\alpha_\infty<1/2$, $\alpha_\infty=1/2$ and $1/2<\alpha_\infty<1$.\\
We first focus on $\alpha_\infty=1/2$. If $\alpha_\infty=1/2$, then
\begin{align*}
g_1(t)=\left\{\begin{array}{cc}
1-t& 0\leq t\leq 1\\
0 & t\geq 1,
\end{array}
\right.
\end{align*}
which implies that (\ref{g(t)}) holds for $g_1(t)$.\\
Next we consider $0<\alpha_\infty<1/2$.  For $0<t\leq 1$, the first derivative of $g_1$
$$\dot{g}_1(t)=\alpha_\infty\left((1+t)^{2\alpha_\infty-1}-(1-t)^{2\alpha_\infty-1}-2t^{2\alpha_\infty-1}\right)<0.$$
Moreover, for $t>1$, by the convexity of $t^{2\alpha_\infty-1}$
$$\dot{g}_1(t)=\alpha_\infty\left((1+t)^{2\alpha_\infty-1}
+(t-1)^{2\alpha_\infty-1}-2t^{2\alpha_\infty-1}\right)>0.
$$
Additionally, direct calculation shows that $\lim_{t\to\infty} g_1(t)=0.$
This means that for $0<\alpha_\infty<1/2$, $g_1(t)$ is strictly decreasing over $(0,1)$ and increasing over $(1,\infty)$ with $g_1(0)=1$, $g_1(1)<0$ and $\lim_{t\to\infty} g_1(t)=0.$  This implies that for any $0<\delta<1$,
$$\sup_{t>\delta}g_1(t)<1.$$
Thus (\ref{g(t)})  holds for $g_1$ with $0<\alpha_\infty<1/2$.\\
Finally, we focus on $1/2<\alpha_\infty<1$.
For $0<t<1$, using the fact that $s^{2\alpha_\infty-2}$ is strictly decreasing over $(0,\infty)$, we have
\begin{align*}
\dot{g}_1(t)&=\alpha_\infty\left((1+t)^{2\alpha_\infty-1}-(1-t)^{2\alpha_\infty-1}-2t^{2\alpha_\infty-1}\right)\\
&\leq \alpha_\infty \left((1+t)^{2\alpha_\infty-1}-(1-t)^{2\alpha_\infty-1}-(2t)^{2\alpha_\infty-1}\right)\\
&=\alpha_\infty (2\alpha_\infty-1)\left(\int_{1-t}^{1+t} s^{2\alpha_\infty-2}ds-\int_{0}^{2t} s^{2\alpha_\infty-2}ds\right)<0.
\end{align*}
For $t>1$, by the convexity of
$t^{2\alpha_\infty-1}$,
$$\dot{g}_1(t)=\alpha_\infty\left((1+t)^{2\alpha_\infty-1}
+(t-1)^{2\alpha_\infty-1}-2t^{2\alpha_\infty-1}\right)<0.
$$
Additionally, direct calculation shows that $\lim_{t\to\infty} g_1(t)=0.$ Thus we have that $g_1(t)$ is strictly decreasing over $(0,\infty)$ with $g_1(0)=1$ and $\lim_{t\to\infty}g_1(t)=0$. Clearly, for any $0<\delta<1$,
$$\sup_{t>\delta}g_1(t)<1,$$
implying that (\ref{g(t)}) holds for $1/2<\alpha_\infty<1$. This completes the proofs. }\\

{\bf Acknowledgement}:
P. Liu was supported by the Swiss National Science Foundation Grant 200021-175752/1.

Research of K. Kosi\'nski was conducted under scientific
Grant No. 2014/12/S/ST1/00491 funded by National Science Centre.
\small\bibliography{biblioteczka}

\end{document}